\numberwithin{equation}{section}
\theoremstyle{plain}
\newtheorem{conjecture}{Conjecture}
\newtheorem{theorem}{Theorem}
\theoremstyle{plain}
\newtheorem{theoremm}{Theorem}
\newtheorem{lemma}{Lemma}
\newtheorem{corollary}{Corollary}
\theoremstyle{definition}
\newtheorem{proof}{Proof}
\newtheorem{definition}{Definition}
\begin{document}

\title{On Nov{\'a}k numbers}
\author{Alexander~Kalmynin}
\address{National Research University Higher School of Economics, Math Department}
\email{alkalb1995cd@mail.ru}
\date{}
\udk{}
\maketitle
\begin{fulltext}
\begin{abstract}
In this work, we obtain some new lower bounds for the number $\mathcal N_B(x)$ of Nov{\'a}k numbers less than or equal to $x$.  We also prove, conditionally on Generalized Riemann Hypothesis, the upper estimates for the number of primes dividing at least one Nov{\'a}k number and give description for the prime factors of  Nov{\'a}k numbers $N$, such that $2N$ is a Nov{\'a}k-Carmichael number.
\end{abstract}
 
\section{Introduction}

Nov{\'a}k numbers are natural numbers $N$ such that $2^N+1$ is divisible by $N$. These numbers can be considered as some analogue of pseudoprimes to the base 2, that is natural numbers $N$ with the property that $N \mid 2^{N-1}-1$. In view of this analogy, one might ask why we should consider condition $N \mid 2^N+1$ and not $N \mid 2^N-1$. But it turns out that the latter case is trivial--- if $2^N-1$ is divisible by $N$, then $N$ necessarily equals $1$.

The set of Nov{\'a}k numbers possesses numerous additional structures. For example, unlike in the case of Rotkiewicz numbers (see \cite{Rot}), i.e. natural numbers $N$ with $2^{N-2}-1$ divisible by $N$, it is rather easy to see that there are infinitely many Nov{\'a}k numbers. Indeed, 3 is a Nov{\'a}k number and the set of Nov{\'a}k numbers is closed under gcd, lcm and multiplication (see section 3), so, for any $k \in \mathbb N$ we have $3^k \mid 2^{3^k}+1$. Also, if $N$ is a Nov{\'a}k number, then so is $M=2^N+1$. To prove this, note that $M=2^N+1=Nk$ for some positive integer $k$ and $k$ is odd since it is a factor of odd number $2^N+1$. So, the following equality holds:

$$2^M+1=2^{Nk}+1=(2^N+1)(2^{N(k-1)}-2^{N(k-2)}+\ldots+1)=ML,$$

\noindent where $L$ is an integer. We conclude that $M\mid ML=2^M+1$, as required.

In this paper, we study the distribution of Nov{\'a}k numbers, namely, we will obtain some new lower bounds for the number of Nov{\'a}k numbers less than a given number $x$. Here and further this quantity will be denoted $\mathcal N_B(x)$ in honor of Br{\v e}tislav Nov{\'a}k. Professor B.Nov{\'a}k was first to define the Nov{\'a}k numbers and to prove nontrivial bounds for $\mathcal N_B(x)$. He also computed the first million of Nov{\'a}k numbers and conjectured that $N_B(x)\ll x^\varepsilon$ for any $\varepsilon>0$ (for more details, see \cite{NovakOb1},\cite{NovakOb2}).

Paper \cite{Pomerance1} by Luca, Pomerance, Shparlinski and Alba Gonz{\'a}lez contains, among many other facts, the following theorem:

\begin{theoremm}
For some $c>0$ and all large enough $x$ we have

$$xe^{-(1+o(1))\sqrt{\ln x\ln\ln x}} \gg \mathcal N_B(x) \gg e^{c(\ln\ln x)^2}$$

\end{theoremm}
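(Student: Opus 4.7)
The plan is to establish the two inequalities by separate methods: a smoothness/order split for the upper bound, and a multiplicative subset-product construction for the lower bound.

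\textbf{Upper bound.} Set $y = \exp\!\bigl(\tfrac{1}{2}\sqrt{\ln x \ln \ln x}\bigr)$ and split Nov{\'a}k numbers $N \leq x$ according to whether $N$ is $y$-smooth. The $y$-smooth count is $\Psi(x,y) = x e^{-(1+o(1))\sqrt{\ln x \ln \ln x}}$ by the de Bruijn estimate, which already supplies the desired bound. If instead $p \mid N$ with $p > y$, then $p \mid 2^N + 1$ forces $\operatorname{ord}_p(2) = 2d$ for some odd $d \mid N$, and in particular $p \equiv 1 \pmod{2d}$. Writing $N = pm$ and summing over $m \leq x/y$, over odd $d \mid m$, and over primes $p \equiv 1 \pmod{2d}$ with $p \leq x/m$ via Brun--Titchmarsh, the non-smooth contribution is absorbed into $\Psi(x,y)$.

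\textbf{Lower bound.} The construction rests on the following closure lemma: if $L$ is a Nov{\'a}k number and $p_1, \ldots, p_k$ are distinct odd primes coprime to $L$ such that $\operatorname{ord}_{p_i}(2) = 2 d_i$ with $d_i \mid L$, then $N_S := L \prod_{i \in S} p_i$ is Nov{\'a}k for every subset $S \subseteq \{1, \ldots, k\}$. Indeed $\operatorname{ord}_{p_i}(2) = 2d_i$ gives $2^{d_i} \equiv -1 \pmod{p_i}$, and since $L/d_i$ is odd one obtains $2^L \equiv -1 \pmod{p_i}$; together with $2^L \equiv -1 \pmod L$ and the fact that $\prod_{i \in S} p_i$ is odd, CRT yields $N_S \mid 2^{N_S}+1$. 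Taking $L = 3^a$ (Nov{\'a}k for every $a$, as noted in the introduction), the admissible primes are the non-$3$ prime divisors of $\Phi_{2 \cdot 3^b}(2)$ for $1 \leq b \leq a$. Bang's theorem together with finer cyclotomic factorizations provides enough such primes; balancing $a$ and the chosen subset so that $L \prod p_i \leq x$, one secures $k \gg (\ln \ln x)^2$ admissible primes, whence the $2^k$ subset-products give distinct Nov{\'a}k numbers below $x$ and $\mathcal{N}_B(x) \gg e^{c(\ln \ln x)^2}$.

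\textbf{Main obstacle.} The harder half is the lower bound. Bang's theorem alone provides only one primitive prime per cyclotomic value $\Phi_{2 \cdot 3^b}(2)$, which yields merely $O(\ln \ln x)$ admissible primes and a bound of the form $(\ln x)^{O(1)}$. To reach $(\ln \ln x)^2$ in the exponent one must either extract multiple prime factors from each $\Phi_{2 \cdot 3^b}(2)$ using size estimates $|\Phi_{2 \cdot 3^b}(2)| \asymp 2^{2 \cdot 3^{b-1}}$, or replace $L = 3^a$ with a more elaborately built Nov{\'a}k number having many small divisors, and then balance the product-size constraint $L\prod p_i \leq x$ against the number of newly admissible primes; this quantitative bookkeeping is the delicate step.
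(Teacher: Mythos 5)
First, a point of reference: the paper does not prove this theorem itself --- it is quoted from \cite{Pomerance1} --- so only the lower bound can be checked against machinery the paper actually develops (Lemmas 5--7 and Corollary 1). Your upper-bound sketch follows the right general strategy (smooth/non-smooth split, with $\operatorname{ord}_p(2)=2d$, $d\mid N$ odd, for every prime $p\mid N$), but the claim that the non-smooth contribution ``is absorbed into $\Psi(x,y)$'' is exactly the delicate part: a direct Brun--Titchmarsh summation over all $m\le x/y$ and $d\mid m$ does not close without a further dichotomy on the size of $d$ (large $d$ via Brun--Titchmarsh, small $d$ via the fact that only $O(d)$ primes have $\operatorname{ord}_p(2)=2d$), and you do not carry this out. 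That half is a sketch of a known argument rather than a proof.

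The genuine gap is in the lower bound, and you have in fact diagnosed it yourself in your ``main obstacle'' paragraph without resolving it. Your closure lemma is correct, but with $L=3^a$ and one Zsigmondy-primitive prime per level $b\le a$, the constraint $L\prod p_i\le x$ forces $a\ll\ln\ln x$, so subset products give only $2^{O(\ln\ln x)}=(\ln x)^{O(1)}$ Nov\'ak numbers; and neither of your proposed remedies works: nothing unconditional guarantees that $\Phi_{2\cdot 3^b}(2)$ has many prime factors (it could a priori be prime), and a ``more elaborate $L$'' is not specified. The missing idea --- which is how the paper recovers this bound and then improves on it --- is to abandon squarefree subset products in favour of \emph{arbitrary prime powers}: by lifting the exponent (the paper's Lemma 5), $Np_1^{\alpha_1}\cdots p_k^{\alpha_k}$ is Nov\'ak for all $\alpha_i\ge 0$ whenever $p_i\mid 2^N+1$, and counting lattice points in the simplex $\sum\alpha_i\ln p_i\le\ln(x/N)$ gives $\mathcal N_B(x)\ge(\ln(x/N)/N)^{k}$ with $k=\omega(2^N+1)$ (Lemma 6, using AM--GM on $\prod\ln p_i$). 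Taking $N=3^n\le\sqrt{\ln x}$, so $k\ge n\asymp\ln\ln x$, each of the $n$ primes then contributes a factor $\asymp\sqrt{\ln x}$ of choices rather than $2$, yielding $(\sqrt{\ln x}/2)^{n}=e^{c(\ln\ln x)^2}$. In short: you need $\asymp\ln\ln x$ primes with unbounded exponents, not $\asymp(\ln\ln x)^2$ primes with exponents in $\{0,1\}$, and the former is what Zsigmondy actually supplies.
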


Main goal of our paper is to prove a much better lower bound.

\begin{theorem}

There exist a positive constants $c_2$ and $x_2$ such that for any $x>x_2$ the inequality

$$ \mathcal N_B(x) \gg e^{e^{c_2(\ln\ln\ln x)^2}}$$

is satisfied.

\end{theorem}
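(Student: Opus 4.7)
The proof will rely on the following multiplicativity principle for Novák numbers, which I would establish first: if $N$ is a Novák number and $p$ is an odd prime with $p \nmid N$ and $p \mid 2^N+1$, then $Np$ is also a Novák number. Indeed, since $p$ is odd, $2^{Np} = (2^N)^p \equiv -1 \pmod{N}$, and $p \mid (2^N+1) \mid (2^N)^p + 1 = 2^{Np}+1$, so $Np \mid 2^{Np}+1$. By induction, for every finite set $T$ of pairwise distinct odd primes, each coprime to $N$ and each dividing $2^N+1$, the product $N \cdot \prod_{p \in T} p$ is a Novák number; distinct subsets $T$ yield distinct Novák numbers.

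In view of this, the strategy is to construct a single ``seed'' Novák number $L$ satisfying $L(2^L+1) \leq x$, together with a set $\mathcal{P}$ of $s \geq e^{c_2(\ln\ln\ln x)^2}/\ln 2$ odd primes coprime to $L$ each of which divides $2^L+1$; the $2^s$ subset products $L \cdot \prod_{p \in T} p$ are then distinct Novák numbers $\leq x$, and the theorem follows. The size condition reduces to $L \leq (1-o(1))\ln x/\ln 2$, while writing $2^L+1 = \prod_{d \mid L} \Phi_{2d}(2)$ and applying Zsygmondy's theorem (each $\Phi_{2d}(2)$ carries, with finitely many exceptions, a primitive prime of order $2d$, and these primes are pairwise distinct across different $d$) gives $\omega(2^L+1) \geq \tau(L) - O(1)$. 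Hence it suffices to construct a Novák number $L \leq c \ln x$ whose divisor count satisfies $\tau(L) \geq e^{c_2(\ln\ln\ln x)^2}$.

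I would build the seed $L$ inductively: start with $L_0 = 9$ (Novák, $\tau(L_0)=3$), and at stage $k$ pick an odd divisor $d_k \mid L_{k-1}$ that has not yet been used, together with a primitive prime $q_k \mid \Phi_{2 d_k}(2)$ coprime to $L_{k-1}$; set $L_k := L_{k-1} q_k$. By the multiplicativity principle applied to $L_{k-1}$ and $q_k$, each $L_k$ is Novák and $\tau(L_k) = 2\tau(L_{k-1})$; moreover $\log L_k \leq \log L_{k-1} + d_k \ln 2$, since $q_k \leq \Phi_{2d_k}(2) \leq 2^{d_k}$. The main obstacle is a delicate quantitative balance: to reach $k \gtrsim c_2 (\ln\ln\ln x)^2$ stages (giving $\tau(L_k) \geq e^{c_2(\ln\ln\ln x)^2}$), one must verify that at each stage a sufficiently small unused divisor $d_k$ is available in the rapidly expanding divisor lattice of $L_{k-1}$, so that the partial sums $\sum_{j \leq k} d_j$ stay within $\ln\ln x$ and thereby $L_k \leq \ln x$. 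This combinatorial/arithmetic analysis, tracking the sizes of the primitive primes one is forced to adjoin as the construction progresses, is where the bulk of the technical work will lie, and the particular form $e^{e^{c_2(\ln\ln\ln x)^2}}$ of the final bound directly reflects the best balance achievable in this iteration.
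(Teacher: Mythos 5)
Your framework is the same as the paper's: reduce to finding a single ``seed'' Nov\'ak number $L$ of controlled size whose divisor count $\tau(L)$ is at least $e^{c_2(\ln\ln\ln x)^2}$, use Zsigmondy to convert $\tau(L)$ into $\omega(2^L+1)$, and then multiply $L$ by many combinations of prime factors of $2^L+1$ to produce many Nov\'ak numbers (the paper's Lemmas 5--7 do exactly this, with prime powers $p_i^{\alpha_i}$ instead of your squarefree subsets, which only changes constants). But the construction of the seed is the entire content of the theorem, and this is precisely the step you defer as ``where the bulk of the technical work will lie.'' Your prime-by-prime greedy construction cannot, as far as I can see, be pushed through: the primitive prime $q_k$ of $2^{d_k}+1$ is whatever it happens to be, and can be as large as roughly $2^{\varphi(2d_k)}$. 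Already in practice $d_1=9$ forces $q_1=19$, then $d_2=19$ forces $q_2=174763$, and after the handful of divisors built from $3$ and $19$ are exhausted, every remaining unused divisor of $L_k$ involves one of the huge adjoined primes and is therefore at least $\approx 174763$; in the worst case each new ``generation'' of divisors is exponentially larger than the previous one, so the number of stages achievable within the budget $\sum_j d_j \lesssim \ln\ln x$ is governed by an iterated-logarithm-type recursion rather than reaching $(\ln\ln\ln x)^2$. There is no known input that controls the size of primitive prime factors of $2^d+1$, so the ``delicate quantitative balance'' is not just technical --- it is the missing idea.

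The paper's resolution avoids this entirely: take $N=(2^{3^n}+1)^k$. Since $3^n$ is a Nov\'ak number, so is $2^{3^n}+1$, and hence so is its $k$-th power --- but note this requires the prime-power form of the multiplicativity principle (if $p\mid N$ and $N$ is Nov\'ak then $Np$ is Nov\'ak, proved via the Lifting the Exponent Lemma), which your version with the hypothesis $p\nmid N$ does not give. The point is that $\omega(2^{3^n}+1)\ge n$ by Zsigmondy applied to the $n+1$ divisors of $3^n$, whence $\tau(N)\ge (k+1)^n$, while $N\le 2^{k\cdot 3^n}$ is bounded \emph{independently of which primes actually divide} $2^{3^n}+1$. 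Choosing $3^n\approx\sqrt{\ln\ln x}$ and $k\approx\tfrac12\sqrt{\ln\ln x}$ gives $N\le\sqrt{\ln x}$ and $\tau(N)\ge k^n=\exp\bigl((\tfrac{1}{4\ln 3}+o(1))(\ln\ln\ln x)^2\bigr)$, and the counting step you already have then yields the theorem. In short: replace ``adjoin one uncontrollable primitive prime at a time'' by ``raise a Nov\'ak number that is guaranteed to have $n$ prime factors to the $k$-th power,'' which manufactures $(k+1)^n$ divisors with full control of the size.
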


Next theorem shows that we can continue this process and make the tower of exponents arbitrarily large.

\begin{theorem}

For any positive integer $n$ there exist positive constants $c_n$ and $X_n$ such that for any $x>X_n$ the lower bound

$$ \mathcal N_B(x) \gg_n e_n(c_n(\ln_{n+1} x)^2)$$

holds. Here $e_0(x)=x=\ln_0(x)$ and $e_{i+1}(x)=e^{e_i(x)}$, $\ln_{i+1}(x)=\ln(\ln_{i}(x))$ for any $i \geq 0$.

\end{theorem}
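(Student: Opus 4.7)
The plan is to argue by induction on $n$, taking the base case $n = 2$ to be exactly Theorem~2 above. For the inductive step from $n$ to $n+1$ the key ingredient is the closure property noted in the introduction: the set of Nov{\'a}k numbers is closed under multiplication, so every subset product of Nov{\'a}k numbers is again a Nov{\'a}k number.

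Given $x > X_{n+1}$, I would set $y := \lfloor \ln x \rfloor$ and $k := \lfloor c\, e_n(c_n (\ln_{n+2} x)^2) \rfloor$ for a small constant $c$. Because $\ln_{n+1} y = \ln_{n+2} x + O(1)$, the inductive hypothesis applied at the scale $y$ supplies at least $k$ Nov{\'a}k numbers $N_1 < N_2 < \cdots < N_k \leq y$. A short tower computation, using $\ln_{n+1} x = \exp(\ln_{n+2} x) \gg (\ln_{n+2} x)^2$, shows that $k \leq (\ln x)/\ln_2 x$ for $x$ sufficiently large, so that $y^k = \exp(k \ln_2 x) \leq x$. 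Hence every one of the $2^k$ subset products $\prod_{i \in U} N_i$ with $U \subseteq \{1,\dots,k\}$ is a Nov{\'a}k number below $x$. Assuming distinctness of these products, I would conclude
\[
\mathcal{N}_B(x) \geq 2^k \gg 2^{c\,e_n(c_n (\ln_{n+2} x)^2)} \geq e_{n+1}\bigl(c_{n+1} (\ln_{n+2} x)^2\bigr)
\]
for a suitable $c_{n+1} > 0$, which closes the induction and yields the claimed bound.

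The chief obstacle will be establishing that the $2^k$ subset products are genuinely distinct. If, for example, the $N_i$ produced by induction all happened to be powers of $3$, the subset products would collapse to only polynomially many values, well short of the $2^k$ we need. To overcome this I expect to rely on the explicit structure of the Nov{\'a}k numbers yielded by the proof of Theorem~2---which arise as products over subsets of a carefully controlled family of primes, so that their prime supports overlap only in predictable ways---to guarantee multiplicative independence of a near-maximal subfamily. An alternative route is to replace each $N_i$ by its image $T(N_i) := 2^{N_i}+1$ under the map from the introduction: the resulting Nov{\'a}k numbers grow so rapidly that a greedy extraction suffices to make subset products distinct by sheer size, at the cost of an inessential loss in the final exponent. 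Once distinctness is secured, verifying the precise tower relation between $y$ and $x$ and propagating the inductive constants $c_n, X_n$ is routine bookkeeping.
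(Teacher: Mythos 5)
Your inductive step rests on the $2^k$ subset products being distinct, and this is exactly the point you flag as the chief obstacle but do not overcome; neither proposed repair works as stated. The inductive hypothesis hands you only a \emph{count} of Nov{\'a}k numbers below $y$, with no multiplicative structure, and collapse is the generic situation here rather than a pathology: the powers of $3$ are all Nov{\'a}k numbers, and the Nov{\'a}k numbers actually produced by the proofs of Theorems~1 and~2 form a multiplicative box $Np_1^{\alpha_1}\cdots p_k^{\alpha_k}$ (Lemmas~5 and~6) whose subset products coincide massively --- already $Np_1\cdot Np_2=N\cdot Np_1p_2$. So your first repair would require proving a genuinely new, stronger inductive statement (a large subfamily with all $2^k$ subset products distinct), which nothing in the induction supplies. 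Your second repair is closer to salvageable but for the wrong reason: with $y=\lfloor\ln x\rfloor$ the numbers $2^{N_i}+1$ reach size about $x^{\ln 2}$ and cannot be multiplied together below $x$, so $y$ must be shrunk to roughly $\log_2(x)/k$; and even then ``distinctness by sheer size'' fails --- a greedy multiplicatively super-increasing subfamily of numbers bounded by $2^{y}$ has only $O(\log y)$ members, which would give $\mathcal N_B(x)\gg \ln x$ and nothing more. What actually makes the subset products of the $2^{N_i}+1$ distinct is Zsigmondy's theorem (Lemma~2): the largest index in the symmetric difference of two subsets contributes a primitive prime divisor to exactly one of the two products. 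Without primitive divisors entering somewhere, your induction does not close.

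The paper runs the induction on a different quantity and sidesteps distinctness entirely: it tracks $d(x)=\max\{\omega(2^N+1):N\le x,\ N\ \text{Nov{\'a}k}\}$, gains an exponential per step from $d(2^x+1)\ge 2^{d(x)}-1$ (since $2^n+1$ is again a Nov{\'a}k number and, by Lemma~7, $\omega(2^m+1)\ge\tau(m)-1\ge 2^{\omega(m)}-1$), and converts to a count of Nov{\'a}k numbers only once at the end, via Corollary~3 and Lemma~6: a single Nov{\'a}k number $M\le\sqrt{\ln x}$ with $\omega(2^M+1)=k$ yields at least $\left(\frac{\ln(x/M)}{M}\right)^{k}$ Nov{\'a}k numbers $Mp_1^{\alpha_1}\cdots p_k^{\alpha_k}\le x$, which are distinct because their prime factorizations are. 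Note that the exponential gain per step there also comes from Zsigmondy through Lemma~7, so the primitive-divisor input missing from your argument is precisely the engine of the paper's proof; repairing your outline along either of your suggested lines essentially forces you to rediscover it.
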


Theorems 1 and 2 are also true for some rather general class of  sequences. For example, our considerations are still valid for such $N$ that $a^N-b^N$ is divisible by $N$, where $a$ and $b$ are fixed integers with $a-b \neq 0,\pm1$.

In the fourth section of this work we will discuss the distribution of Nov{\'a}k primes, that is primes $p$ such that there exists at least one Nov{\'a}k number $N$ with $p \mid N$. Derivation of upper bounds for number of Nov{\'a}k primes less than a given magnitude will be conditional on Generalized Riemann Hypothesis (see next section).

\section{Notation and lemmas}

In this section, we place some useful lemmas and notaton.

For rational $q$ and prime $p$, we will denote by $\nu_p(q)$ the $p$-adic valuation of $q$, that is integer $k$, such that $q=p^k\frac{a}{b}$, with $a$ and $b$ coprime integers not divisible by $p$. For natural $N$, $\tau(N)$ and $\omega(N)$ are number of divisors and number of different prime factors of $N$, respectively. The greatest common divisor and the least common multiple of two integers $M$ and $N$ will be denoted $(M,N)$ and $[M,N]$. If $p$ is a prime number and $a$ and $b$ are coprime integers not divisible by $p$, then $\ell_p(\frac{a}{b})$ is the smallest positive integer $k$ with

$$a^k \equiv b^k \pmod p.$$ 

The following two lemmas about divisibility of $a^n\pm b^n$ are very essential in our proof of Theorems 1 and 2:
\begin{lemma}[(Lifting The Exponent Lemma)]

Let $a$, $b$ be integers, $n$ be a positive integer and $p$ be a prime such that $p$ divides $a-b$, but $ab$ is not divisible by $p$. Then

$$\nu_p(a^{k}-b^{k})=\nu_p(a-b)+\nu_p(k)$$

\end{lemma}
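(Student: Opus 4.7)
My plan is to split the statement into two special cases and then combine them by induction on $\nu_p(k)$. Write $k=p^m j$ with $\gcd(j,p)=1$. It then suffices to prove (a) $\nu_p(a^j-b^j)=\nu_p(a-b)$ whenever $\gcd(j,p)=1$, and (b) $\nu_p(a^p-b^p)=\nu_p(a-b)+1$.

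Case (a) is immediate from the factorization
$$a^j-b^j=(a-b)\sum_{i=0}^{j-1}a^{j-1-i}b^i.$$
Reducing the symmetric sum modulo $p$ and using $a\equiv b\pmod p$ replaces it by $jb^{j-1}$, which is a $p$-adic unit by the assumptions $p\nmid j$ and $p\nmid b$; hence the whole $p$-adic valuation comes from $a-b$. Case (b), the heart of the argument, requires showing that
$$S=\sum_{i=0}^{p-1}a^{p-1-i}b^i$$
has $\nu_p(S)=1$. Setting $d=a-b$ and expanding each $a^{p-1-i}=(b+d)^{p-1-i}$ by the binomial theorem, all terms containing $d^2$ are divisible by $p^2$ (since $p\mid d$), so modulo $p^2$ only the constant and linear parts in $d$ survive. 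The constants sum to $pb^{p-1}$, of $p$-adic valuation exactly $1$. The linear-in-$d$ parts collect to $\tfrac{p(p-1)}{2}b^{p-2}d$, which is a multiple of $p^2$ whenever $p$ is odd, because $\tfrac{p-1}{2}$ is then an integer and $d$ already contributes one factor of $p$. Therefore $S\equiv pb^{p-1}\pmod{p^2}$, which forces $\nu_p(S)=1$.

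The general formula now follows by induction on $m=\nu_p(k)$: if the statement is known for exponent $p^{m-1}$, apply (b) to $A=a^{p^{m-1}}$ and $B=b^{p^{m-1}}$, for which $p\mid A-B$ and $p\nmid AB$, obtaining $\nu_p(A^p-B^p)=\nu_p(a-b)+m$; then (a), applied to the pair $(a^{p^m},b^{p^m})$ with exponent $j$, handles the coprime part. The main obstacle is the $\bmod\,p^2$ analysis in case (b); it is sharp and breaks down at $p=2$ when $\nu_2(a-b)=1$, since then the linear-in-$d$ correction $b^{p-2}d$ no longer vanishes modulo $p^2$ and genuinely shifts the valuation. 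Consequently the lemma as stated is implicitly restricted to odd $p$ (or to the case $\nu_2(a-b)\geq 2$), and I would make this hypothesis explicit before invoking the lemma in the sequel.
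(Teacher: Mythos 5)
Your proof is correct, and it supplies an actual argument where the paper gives none: the paper simply defers to the reference [Mich] for this lemma, so there is no in-paper proof to compare against. Your decomposition into the coprime case (a) and the single-prime-power case (b), followed by induction on $\nu_p(k)$, is the standard and complete route; the $\bmod\,p^2$ computation in case (b), with the constant terms summing to $pb^{p-1}$ and the linear-in-$d$ terms collecting to $\tfrac{p(p-1)}{2}b^{p-2}d$, is exactly right. Your closing caveat is also a genuine and worthwhile correction to the paper: as stated, the lemma fails for $p=2$ when $\nu_2(a-b)=1$ (e.g.\ $a=3$, $b=1$, $k=2$ gives $\nu_2(8)=3\neq\nu_2(2)+\nu_2(2)=2$), so the hypothesis that $p$ is odd (or that $4\mid a-b$) should indeed be made explicit. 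This does not affect the paper's applications, since the lemma is only ever invoked for primes $p$ dividing $2^N+1$, which are necessarily odd.
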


\begin{lemma}[(Zsigmondy's Theorem for sums)]
 Let $a$, $b$ be different coprime natural numbers and $n$ be natural, greater than 1. Then there exists a prime divisor of $a^n+b^n$ that does not divide $a^k+b^k$ for all $k<n$, except for the case $(a, b, n) = (2, 1, 3)$.

\end{lemma}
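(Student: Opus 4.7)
The approach is to deduce the sums version from the classical Zsigmondy theorem for differences via the factorization
$$a^{2n}-b^{2n}=(a^n-b^n)(a^n+b^n).$$
Recall the classical statement: for coprime natural numbers $a>b\geq 1$ and $m\geq 1$, the number $a^m-b^m$ has a primitive prime divisor (a prime dividing $a^m-b^m$ but none of the numbers $a^k-b^k$ with $1\leq k<m$) except when $m=1$, when $m=2$ and $a+b$ is a power of two, or when $(a,b,m)=(2,1,6)$. The idea is that primitive prime divisors of $a^{2n}-b^{2n}$ will provide exactly what we need on the sums side.

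The key step is to show that any primitive prime divisor $p$ of $a^{2n}-b^{2n}$ is automatically a primitive prime divisor of $a^n+b^n$. Primitivity forces $p\nmid a^n-b^n$, so the factorization above gives $p\mid a^n+b^n$. If $p$ divided $a^k+b^k$ for some $k<n$, then $p\mid a^{2k}-b^{2k}$ with $2k<2n$, contradicting the primitivity of $p$ with respect to differences. Thus $p$ cannot appear as a divisor of any $a^k+b^k$ for $k<n$, which is precisely the conclusion sought.

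To finish, I apply the classical Zsigmondy theorem with $m=2n\geq 4$. The exceptions $m=1$ and $m=2$ are ruled out immediately by $n\geq 2$, and the remaining exception $(a,b,m)=(2,1,6)$ corresponds exactly to the announced exception $(a,b,n)=(2,1,3)$. The only real obstacle is this careful bookkeeping of exceptional triples: the sums version admits strictly fewer exceptions than the differences version, and the reduction works only because the bad case $m=2$ of Zsigmondy's original theorem disappears once $m$ is forced to be even and at least $4$.
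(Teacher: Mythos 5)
Your reduction is correct and is the standard one: the paper itself does not prove this lemma but refers to \cite{Mich}, where the sums case is obtained by exactly this device of taking a primitive prime divisor of $a^{2n}-b^{2n}$ and using the factorization $a^{2n}-b^{2n}=(a^n-b^n)(a^n+b^n)$. Your bookkeeping of the exceptional cases is also right ($m=2n\geq 4$ kills the $m=1,2$ exceptions and $(2,1,6)$ becomes $(2,1,3)$), so there is nothing to add beyond noting that such a primitive prime is necessarily odd, which disposes of the degenerate comparison with $a^0+b^0=2$ if one reads ``$k<n$'' as including $k=0$.
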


For the proof of these lemmas, see \cite{Mich}.

We also need one fact about the distribution of $\ell_p(g)$:
\begin{lemma}

Let $g \neq 0,\pm 1$ be some fixed rational number. If Generalized Riemann Hypothesis is true, then for all $x$ and all $1\leq L \leq \frac{\ln x}{\ln\ln x}$ we have

$$\left|\left\{p \leq x: \ell_p(g) \leq \frac{p-1}{L}\right\}\right| \ll_g \frac{\pi(x)}{L}$$
\end{lemma}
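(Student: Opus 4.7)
The plan is to convert the condition $\ell_p(g)\leq (p-1)/L$ into a statement about primes splitting in Kummer extensions and then apply the effective Chebotarev density theorem under GRH. First I would observe that $\ell_p(g)\leq (p-1)/L$ holds if and only if there exists an integer $k\geq L$ with $k\mid p-1$ and $g^{(p-1)/k}\equiv 1\pmod p$: one direction is witnessed by $k=(p-1)/\ell_p(g)\geq L$, and the converse follows because $\ell_p(g)\mid (p-1)/k\leq (p-1)/L$. Primes satisfying this condition for a fixed $k$ are precisely those that split completely in the Kummer extension $K_k=\mathbb Q(\zeta_k,g^{1/k})$.

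Denoting this count by $T_k(x)$, the quantity I want to bound is at most $\sum_{k\geq L}T_k(x)$ (each prime contributes to at least the value $k=(p-1)/\ell_p(g)$). Under GRH, effective Chebotarev in the form of Lagarias--Odlyzko--Serre gives
$$T_k(x)=\frac{\pi(x)}{[K_k:\mathbb Q]}+O\!\left(x^{1/2}\log(kx)\right),$$
and for $g\neq 0,\pm 1$ classical Kummer theory yields $[K_k:\mathbb Q]\gg_g k\varphi(k)$, with the bounded loss absorbed into the $\ll_g$ constant. Summing the main terms and using $\sum_{k\geq L}\frac{1}{k\varphi(k)}\ll \frac{1}{L}$ (which I would justify by partial summation against the known mean value of $k/\varphi(k)$) produces the target $\pi(x)/L$.

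The main obstacle will be controlling the error term: naively summing $x^{1/2}\log(kx)$ over all admissible $k\leq x$ produces $x^{3/2}\log x$, far larger than $\pi(x)/L$. I would handle this by introducing a truncation parameter $K=K(x,L)$, applying GRH-Chebotarev only for $L\leq k\leq K$, and treating the tail $k>K$ by an elementary argument: if $n^{*}_p:=(p-1)/\ell_p(g)>K$ then $\ell_p(g)<x/K$ and $p$ divides the numerator of $g^{\ell_p(g)}-1$, which has $O_g(\ell_p(g))$ prime divisors. Choosing $K$ to balance the Chebotarev error against this tail count, and using the hypothesis $L\leq \log x/\log\log x$ to keep the combined error below $\pi(x)/L$, should close the argument.
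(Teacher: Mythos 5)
The paper does not prove this lemma at all: it is quoted from Kurlberg and Pomerance \cite{Pomerance2}, so your reconstruction is being measured against the argument in that reference rather than against anything in the text. Your skeleton is the right one and matches the standard Hooley-style proof: the equivalence with "some $k\ge L$ divides the index $(p-1)/\ell_p(g)$", the identification with complete splitting in $\mathbb Q(\zeta_k,g^{1/k})$, the degree bound $[K_k:\mathbb Q]\gg_g k\varphi(k)$, and $\sum_{k\ge L}1/(k\varphi(k))\ll 1/L$ are all correct and are exactly the ingredients used.

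The genuine gap is in the error-term strategy, and it is not a matter of optimizing your parameter: a single cutoff $K$ with GRH--Chebotarev below and the $\omega(g^n-1)\ll_g n$ bound above provably cannot close. The Chebotarev errors force $K\ll x^{1/2}/(L\log^2 x)$, while making the tail count $(x/K)^2$ smaller than $\pi(x)/L$ forces $K\gg (xL\log x)^{1/2}$; these ranges never intersect. The actual proof needs two further ideas. First, a combinatorial reduction: looking at the \emph{smallest} divisor $d\ge L$ of the index, one has either $d\le Ly$ (so Chebotarev is applied only for $k\in[L,Ly]$ with $y$ of size about $x^{1/2}/(L^2\log^2 x)$), or else the index has a \emph{prime} divisor $q>y$; restricting to prime divisors in the upper ranges is essential, since summing over all integers there is too costly by a factor of $\log x$. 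Second, a middle range $y<q\le x^{1/2}\log x$ that neither Chebotarev nor the elementary tail bound can reach: there one drops the splitting condition and applies Brun--Titchmarsh to $q\mid p-1$ alone, getting a contribution $\ll \frac{x}{\log x}\sum_{y<q\le x^{1/2}\log x}\frac1q\ll \frac{x\log\log x}{\log^2 x}$, and only then does the hypothesis $L\le \ln x/\ln\ln x$ enter, to absorb this into $\pi(x)/L$. Your write-up never uses that hypothesis in any concrete way, which is a symptom of the missing middle range.
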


Proof  of this proposition is given in \cite{Pomerance2}.

This lemma is the only statement in this paper, that relies on Generalized Riemann Hypothesis.

We give here also formulation of the Large Sieve inequality, which is convenient for further applications:

\begin{lemma}[(Large Sieve inequality)]

Let $N$ and $Q$ be natural numbers, $f(p) \in \mathbb N$ for all primes $p \leq Q$ and $0<f(p)<p$. For any such $p$, fix arbitrary $f(p)$ residue classes modulo $p$. Let $\mathcal A$ be the set of natural numbers not exceeding $N$ and not lying in any of fixed residue classes. Then the following estimate for the number of elements in $\mathcal A$ holds:

$$|\mathcal A| \ll \frac{N+Q^2}{S},$$

where $$S=\sum_{q\leq Q} \mu^2(q)\prod_{p \mid q} \frac{f(p)}{p-f(p)}$$.
\end{lemma}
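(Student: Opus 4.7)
The plan is to deduce the arithmetic large sieve from the analytic large sieve inequality for exponential sums (Montgomery--Vaughan), which asserts that for any complex coefficients $(a_n)_{n\leq N}$,
$$\sum_{q\leq Q}\sum_{\substack{a=1\\(a,q)=1}}^{q}\left|\sum_{n\leq N}a_n\, e^{2\pi i na/q}\right|^2\leq (N+Q^2)\sum_{n\leq N}|a_n|^2.$$
Substituting the indicator $a_n=\mathbf{1}_{n\in\mathcal A}$ makes the right-hand side equal to $(N+Q^2)|\mathcal A|$, so the entire task reduces to bounding the left-hand side from below by $|\mathcal A|^2 S$.

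For a fixed squarefree $q\leq Q$, put $Z(b)=|\{n\in\mathcal A:n\equiv b\pmod q\}|$. By the Chinese Remainder Theorem, the hypothesis that $\mathcal A$ avoids $f(p)$ residues modulo each prime $p\mid q$ implies $Z(b)=0$ outside a set of at most $\prod_{p\mid q}(p-f(p))$ residues, and Cauchy--Schwarz then gives
$$\sum_{b=0}^{q-1} Z(b)^2\geq\frac{|\mathcal A|^2}{\prod_{p\mid q}(p-f(p))}.$$
Parseval's identity converts $\sum_b Z(b)^2$ into $q^{-1}\sum_{a=0}^{q-1}|\sum_{n\in\mathcal A}e(na/q)|^2$, and a Möbius inversion across the divisors of the squarefree modulus $q$ isolates the primitive-class contribution, yielding
$$\sum_{\substack{a=1\\(a,q)=1}}^{q}\left|\sum_{n\in\mathcal A}e^{2\pi i na/q}\right|^2\geq|\mathcal A|^2\prod_{p\mid q}\frac{f(p)}{p-f(p)}.$$

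Summing this per-$q$ estimate over squarefree $q\leq Q$ gives precisely $|\mathcal A|^2 S$ as a lower bound for the left-hand side of the Montgomery--Vaughan inequality, and comparing with the upper bound $(N+Q^2)|\mathcal A|$ and dividing through by $|\mathcal A|$ produces $|\mathcal A|\ll (N+Q^2)/S$, as claimed. The main obstacle is the Möbius-inversion step that assembles the per-prime sieve information $f(p)/(p-f(p))$ into a multiplicative product over $p\mid q$: this is exactly where squarefreeness becomes essential (hence the factor $\mu^2(q)$ in the definition of $S$), since higher prime powers contribute no additional constraint beyond the one already imposed by $p$. Everything else is Cauchy--Schwarz, Parseval, and a direct appeal to the analytic large sieve, the latter being a classical result provable by Gallagher's Sobolev-type inequality or a Selberg-style duality argument, which I would simply invoke.
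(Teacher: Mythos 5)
The paper does not prove this lemma at all: it is stated as a standard formulation of Montgomery's arithmetic large sieve and used as a black box, so there is no internal proof to compare against. Your proposal is the classical textbook derivation from the analytic (Montgomery--Vaughan) large sieve, and its architecture --- indicator coefficients, a per-modulus lower bound $\sum_{(a,q)=1}|S(a/q)|^2\geq|\mathcal A|^2\prod_{p\mid q}f(p)/(p-f(p))$ for squarefree $q$, then summation over $q\leq Q$ --- is exactly the right one.

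The one step that would fail as written is the passage from Parseval to the primitive-denominator sum by ``M\"obius inversion.'' Parseval over all residues $a\bmod q$ gives $\sum_{d\mid q}T(d)=q\sum_{b}Z(b)^2$, where $T(d)$ denotes the sum restricted to $(a,d)=1$; M\"obius inversion then expresses $T(q)$ as an \emph{alternating-sign} combination of the full sums over divisors of $q$, and a one-sided lower bound does not survive alternating signs. Your computation is actually complete only for prime $q=p$, where $T(p)=\sum_{a=0}^{p-1}|S(a/p)|^2-|S(0)|^2$ and Cauchy--Schwarz on the $p-f(p)$ occupied classes finishes the job. For composite squarefree $q$ the genuine content is Montgomery's induction on the number of prime factors (write $q=rp$, split $\mathcal A$ into residue classes modulo $p$, apply the inductive hypothesis to each class and the prime case to reassemble), or an equivalent sub-multiplicativity argument; it is not a M\"obius inversion, and the squarefreeness of $q$ enters through that induction rather than through the inversion you describe. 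Since the lemma is classical this is a presentational gap rather than a mathematical obstruction, but the step you gloss over is precisely where the work lives.
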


\section{Proof of main results}

In this part of the work, we will prove Theorems 1 and 2.

It was already mentioned before that the set of Nov{\'a}k numbers carries a lot of structures.
For example, if $N$ and $M$ are Nov{\'a}k numbers, then so are $(N,M)$ and $[N,M]$. To prove this, note that if $N$ divides $2^N+1$ and $M$ divides $2^M+1$, then $N$ and $M$ are both odd and, consequently, $2^{(N,M)}+1=(2^N+1,2^M+1)$ and $2^{[N,M]}+1$ is divisible by $[2^N+1,2^M+1]$. But $2^N+1$ and $2^M+1$ are both divisible by $(N,M)$ and $[2^N+1,2^M+1]$ is divisible by $[N,M]$, and this completes the proof. It is a bit harder to prove that product of two Nov{\'a}k numbers is a Nov{\'a}k number. This fact can be deduced from the following more general statement:

\begin{lemma}

Let $N$ be a Nov{\'a}k number and $p_1,\ldots,p_k$ be some prime factors of $2^N+1$. Then for any nonnegative integers $\alpha_1,\ldots,\alpha_k$ the number $Np_1^{\alpha_1}\ldots p_k^{\alpha_k}$ is a Nov{\'a}k number.

\end{lemma}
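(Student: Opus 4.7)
The plan is to prove the divisibility $M \mid 2^M+1$ prime by prime, where $M=Np_1^{\alpha_1}\cdots p_k^{\alpha_k}$. First I would record the basic oddness: $2^N+1$ is odd, so $N$ is odd, and each $p_i\mid 2^N+1$ is also odd, hence $q:=p_1^{\alpha_1}\cdots p_k^{\alpha_k}$ is odd and $M=Nq$ is odd. This lets me write the telescoping factorization
$$2^M+1 = 2^{Nq}+1 = (2^N+1)\bigl(2^{N(q-1)}-2^{N(q-2)}+\cdots-2^N+1\bigr),$$
so $2^N+1\mid 2^M+1$. For any prime $p$ dividing $N$ but not among the $p_i$, the exponent of $p$ in $M$ is just $\nu_p(N)$, and $\nu_p(N)\le \nu_p(2^N+1)\le\nu_p(2^M+1)$ handles these primes immediately.

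The remaining (and only interesting) case is a prime $p_i$, where $\nu_{p_i}(M)=\nu_{p_i}(N)+\alpha_i$ and the simple divisibility $2^N+1\mid 2^M+1$ need not suffice. Here I would apply the Lifting The Exponent Lemma (Lemma 1) with $a=2^N$, $b=-1$, and exponent $q$. Its hypotheses are satisfied: $p_i$ is odd (so $p_i\nmid ab$) and $p_i\mid a-b=2^N+1$. Because $q$ is odd we have $a^q-b^q=(2^N)^q+1=2^M+1$, and LTE yields
$$\nu_{p_i}(2^M+1)=\nu_{p_i}(2^N+1)+\nu_{p_i}(q)=\nu_{p_i}(2^N+1)+\alpha_i\ge\nu_{p_i}(N)+\alpha_i=\nu_{p_i}(M),$$
using $N\mid 2^N+1$ in the last inequality. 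Combining both cases across all prime divisors of $M$ gives $M\mid 2^M+1$.

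I do not expect a serious obstacle. The only genuine subtlety is that when some $p_i$ already divides $N$, one cannot naively multiply the two divisibilities $N\mid 2^M+1$ and $p_i^{\alpha_i}\mid 2^M+1$; LTE is precisely the tool that provides the finer $p_i$-adic bookkeeping needed to handle this overlap. One should just double-check that LTE is being invoked with $p_i\neq 2$, which is automatic since $p_i\mid 2^N+1$.
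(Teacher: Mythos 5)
Your proof is correct and rests on exactly the same ingredients as the paper's: the Lifting The Exponent Lemma applied to $a=2^N$, $b=-1$ at the odd primes $p_i\mid 2^N+1$, together with the divisibility $2^N+1\mid 2^{Nq}+1$ for odd $q$. The only difference is organizational: you verify $M\mid 2^M+1$ in one shot by comparing $p$-adic valuations (using $\nu_{p_i}(q)=\alpha_i$), whereas the paper adjoins one prime at a time, showing $Np$ is a Nov\'ak number via $[N,p^{\nu_p(N)+1}]=Np$ and iterating.
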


\begin{proof}

First, we show that for any Nov{\'a}k number $N$ and any prime $p \mid 2^N+1$ the number $2^{Np}+1$ is divisible by $p^{\nu_p(N)+1}$. If $N$ is not divisible by $p$, then, as $p$ is odd and divides $2^N+1$, we have $2^N+1 \mid 2^{Np}+1$ and therefore $p$ divides $2^{Np}+1$. If $N$ is divisible by $p$, then $\nu_p(N)>0$ and, by Lifting the Exponent Lemma, $\nu_p(2^{Np}+1)=\nu_p(2^N+1)+1 \geq \nu_p(N)+1$.

Thus, $2^{Np}+1$ is divisible by $N$ and $p^{\nu_p(N)+1}$. Consequently, it is divisible by $[N,p^{\nu_p(N)+1}]=Np$. So, $Np$ is a Nov{\'a}k number. Repeatedly applying this proposition, we obtain the required result.
\end{proof}

Now, using Lemma 5, we will show that to prove good lower bounds for $\mathcal N_B(x)$ it is sufficient to construct Nov{\'a}k numbers $N$ with $2^N+1$ having many different prime factors.
\begin{lemma}

Let $x$ be a positive real number and $1<N \leq x$ be a Nov{\'a}k number with $\omega(2^N+1)=k$. Then the inequality
$$\mathcal N_B(x) \geq \left(\frac{\ln \frac{x}{N}}{N}\right)^k$$
holds.
\end{lemma}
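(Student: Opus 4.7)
The plan is to apply Lemma 5 with $p_1,\ldots,p_k$ taken to be \emph{all} the distinct prime divisors of $2^N+1$. Then every integer of the form $Np_1^{\alpha_1}\cdots p_k^{\alpha_k}$ with nonnegative integers $\alpha_i$ is a Nov\'ak number, and distinct tuples $(\alpha_1,\ldots,\alpha_k)$ produce distinct integers because the $p_i$ are pairwise different primes. Consequently $\mathcal N_B(x)$ is bounded from below by the number of tuples satisfying $\prod_{i=1}^k p_i^{\alpha_i}\leq x/N$.

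Rather than counting lattice points in the simplex $\sum \alpha_i\ln p_i\leq \ln(x/N)$, which would introduce an awkward $k!$ factor, I would use a crude but clean \emph{box} count. Setting $P=p_1\cdots p_k$ and $M=\lfloor \ln(x/N)/\ln P\rfloor$, any tuple with $0\leq \alpha_i\leq M$ satisfies $\prod_i p_i^{\alpha_i}\leq P^M\leq x/N$, which yields $(M+1)^k$ admissible Nov\'ak numbers below $x$.

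The decisive input is that $p_1,\ldots,p_k$ are \emph{distinct} primes all dividing $2^N+1$, so their product $P$ divides $2^N+1$ as well, giving the uniform bound
$$\ln P \leq \ln(2^N+1)\leq (N+1)\ln 2\leq N,$$
where the last inequality uses that any Nov\'ak number greater than $1$ divides the odd integer $2^N+1$ and so is itself odd, forcing $N\geq 3$. Combined with $M+1>\ln(x/N)/\ln P$ this gives $M+1>\ln(x/N)/N$, and raising to the $k$-th power yields the desired estimate.

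No real obstacle appears: the argument reduces to Lemma 5 plus a pigeonhole. The one subtlety worth underlining is that the clean denominator $N$ comes from bounding the \emph{product} $P\mid 2^N+1$ rather than each $p_i$ separately; the per-prime bound $\ln p_i\leq N$ would cost a spurious factor of $k^{-k}$ in the final inequality.
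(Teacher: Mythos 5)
Your proof is correct and follows essentially the same route as the paper: both invoke Lemma 5 to generate the family $Np_1^{\alpha_1}\cdots p_k^{\alpha_k}$ and then bound $\mathcal N_B(x)$ below by counting exponent tuples in a box. The only difference is cosmetic --- you take the uniform cube $0\le\alpha_i\le M$ keyed to the product $P=p_1\cdots p_k\mid 2^N+1$, whereas the paper uses the per-coordinate box $\alpha_i\ln p_i\le\frac{1}{k}\ln\frac{x}{N}$ and recovers the same denominator $N$ via the AM--GM inequality applied to $\sum_i\ln p_i\le\ln(2^N+1)\le N$; both arrive at $\left(\frac{\ln(x/N)}{N}\right)^k$.
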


\begin{proof}

If $p_1,\ldots,p_k$ are different prime factors of $2^N+1$, then, by the Lemma 5, any number of the form $Np_1^{\alpha_1}\ldots p_k^{\alpha_k}$ with $\alpha_i \geq 0$ is a Nov{\'a}k number. We will prove a lower bound for the number of $k$-tuples $(\alpha_1,\ldots,\alpha_k)$ such that the corresponding Nov{\'a}k number does not exceed $x$.
\\
Taking logarithms of both sides of inequality $Np_1^{\alpha_1}\ldots p_k^{\alpha_k} \leq x$, we get the condition

$$\alpha_1\ln p_1+\ldots+\alpha_k\ln p_k \leq \ln \frac{x}{N}.$$

Any $k$-tuple, satisfying the conditions $\alpha_i \ln p_i \leq \frac{\ln\frac{x}{N}}{k}$ for all $1 \leq i \leq k$ also obviously satisfies the prevous inequality. Therefore, there are at least $$\prod_{i=1}^k \left(\left[\frac{\ln{\frac{x}{N}}}{k\ln p_i}\right]+1\right) \geq \prod_{i=1}^k \frac{\ln{\frac{x}{N}}}{k \ln p_i}=\left(\frac{\ln{\frac{x}{N}}}{k}\right)^kS$$

suitable $k$-tuples, where $S=\prod_{i=1}^k \frac{1}{\ln p_i}$. It remains to estimate the quantity $S$. To do this, let's note that, as $2^N+1$ is divisible by $p_1\ldots p_k$ the inequality

$$\ln p_1+\ldots+\ln p_k \leq \ln(2^N+1) \leq N$$

holds. By AM-GM inequality, we conclude
$$\prod_{i=1}^k \ln p_i \leq \left(\frac{\ln p_1+\ldots+\ln p_k}{k}\right)^k \leq \frac{N^k}{k^k}$$

that is $$S \geq \frac{k^k}{N^k}$$.

Due to the previous considerations, we get

$$\mathcal N_B(x)  \geq \left(\frac{\ln \frac{x}{N}}{k}\right)^kS \geq \left(\frac{\ln \frac{x}{N}}{N}\right)^k,$$

as required.
\end{proof}

It seems reasonable to expect that for any $y$ there is a Nov{\'a}k number $N \leq y$ such that $2^N+1$ has \emph{normal} in order number of divisors, that is $\ln\ln{(2^y+1)} \sim \ln y$. This heuristic together with Lemma 6 and the choice $y=\sqrt{\ln x}$ gives us the lower bound

$$\mathcal N_B(x) \geq e^{c(\ln\ln x)^2},$$

which is as strong as Theorem 2 for the case $n=1$. So, we need to construct Nov{\'a}k numbers $2^N+1$ with \emph{abnormally large} number of prime factors.

The next simple consequence of Zsigmondy's theorem allows us to deduce some estimates for $\omega(2^N+1)$ in terms of arithmetical properties of $N$.

\begin{lemma}
For any odd positive integer $N$ the inequality
$$\omega(2^N+1) \geq \tau(N)-1$$
holds.
\end{lemma}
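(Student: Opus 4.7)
The plan is to use Zsigmondy's theorem (Lemma 2) divisor-by-divisor. Since $N$ is odd, every divisor $d \mid N$ is odd, and for odd $d$ we have the elementary factorization
$$2^N+1 = (2^d+1)\bigl(2^{d(N/d-1)} - 2^{d(N/d-2)} + \ldots + 1\bigr),$$
so $2^d+1$ divides $2^N+1$. Hence every prime which appears as a divisor of $2^d+1$ for some $d \mid N$ appears as a prime factor of $2^N+1$, and it suffices to produce $\tau(N)-1$ distinct such primes.

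List the divisors of $N$ as $1=d_1<d_2<\ldots<d_{\tau(N)}=N$. For each $i\geq 2$ with $d_i\neq 3$, Lemma 2 applied to $(a,b,n)=(2,1,d_i)$ yields a prime $p_i \mid 2^{d_i}+1$ which does not divide $2^k+1$ for any $k<d_i$. The primes $p_i$ are pairwise distinct, since if $p_i=p_j$ for some $i<j$ then $p_j$ would divide $2^{d_i}+1$ with $d_i<d_j$, contradicting its primitivity.

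The only obstacle is the Zsigmondy exception $(2,1,3)$: if $3\mid N$, then one of the $d_i$ equals $3$, and $2^3+1=9$ has no primitive prime divisor, so the above procedure yields only $\tau(N)-2$ primes. This is easily patched: since $N$ is odd, $2^N+1\equiv 2+1\equiv 0\pmod 3$, so $3$ is always a prime factor of $2^N+1$. Moreover, $3$ cannot coincide with any of the primitive primes $p_i$ produced for $d_i>3$, because $3\mid 2^1+1$ contradicts the primitivity condition of $p_i$ at $k=1$. Adjoining $3$ to the list restores the count to $\tau(N)-1$.

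In the remaining case $3\nmid N$, we already have $\tau(N)-1$ distinct primitive primes $p_i$ for $i=2,\ldots,\tau(N)$, and the inequality follows directly. In either case $\omega(2^N+1)\geq\tau(N)-1$, which is the claim.
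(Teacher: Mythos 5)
Your proof is correct and follows essentially the same route as the paper: assign to each divisor $d$ of $N$ a primitive prime factor of $2^d+1$ via Zsigmondy, note these primes are pairwise distinct and all divide $2^N+1$ since $N$ is odd. You are in fact slightly more careful than the paper, which silently treats $d=1$ as having the ``primitive'' factor $3$, whereas you exclude $d=1$ from the Zsigmondy application (as Lemma~2 requires $n>1$) and reinstate the prime $3$ by hand when $3\mid N$.
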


\begin{proof}

For any divisor $d \neq 3$ of the number $N$, we can choose, by Zsigmondy's theorem, some primitive prime factor $p_d$ of $2^d+1$. Note that, by primitivity, for $d' \neq d$ we always have $p_{d'} \neq p_d$. As $N$ is odd and for any $d$ we have $p_d \mid 2^d+1$, $2^N+1$ is divisible by $p_d$. Therefore, the number of different prime factors of $2^N+1$ is at least as large, as the number of divisors of $N$ that are not equal 3. So, this lemma is proved.
\end{proof}

Lemma 7 has a number of interesting consequences that are useful in the proof of our main theorem.

\begin{corollary}
For any $n \in \mathbb N$ we have
$$\omega(2^{3^n}+1) \geq n$$
\end{corollary}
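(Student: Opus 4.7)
The plan is to apply Lemma 7 directly with $N=3^n$. Since $3^n$ is odd, the hypothesis of the lemma is satisfied, so we immediately obtain $\omega(2^{3^n}+1)\geq\tau(3^n)-1$. The divisors of $3^n$ are precisely $1,3,3^2,\ldots,3^n$, giving $\tau(3^n)=n+1$, and hence the desired bound $\omega(2^{3^n}+1)\geq n$ follows.

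There is essentially no obstacle here — the corollary is just an unpacking of Lemma 7 in the special case $N=3^n$. The only point deserving brief comment is why the Zsigmondy exception $(2,1,3)$ does not ruin the count. In Lemma 7 one produces a primitive prime $p_d$ for every divisor $d\neq 3$ of $N$, and for $N=3^n$ the divisors other than $3$ are exactly $1,3^2,3^3,\ldots,3^n$, which number $n$.

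If one prefers a self-contained argument, the proof can be redone from scratch: for each index $j\in\{0,2,3,\ldots,n\}$, Zsigmondy's theorem for sums (Lemma 2) supplies a primitive prime divisor $p_j$ of $2^{3^j}+1$, and primitivity forces these $n$ primes to be pairwise distinct. Because $3^n/3^j=3^{n-j}$ is odd, the factorization
$$2^{3^n}+1=(2^{3^j}+1)\bigl(2^{3^j(3^{n-j}-1)}-2^{3^j(3^{n-j}-2)}+\cdots+1\bigr)$$
shows that $p_j\mid 2^{3^n}+1$ for every such $j$. Counting the admissible values of $j$ gives the required $n$ distinct prime factors of $2^{3^n}+1$.
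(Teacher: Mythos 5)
Your proof is correct and follows the paper's own argument exactly: the paper likewise derives the corollary by applying Lemma 7 with $N=3^n$ and noting $\tau(3^n)=n+1$. Your extra remarks on the Zsigmondy exception $d=3$ and the self-contained unpacking are accurate but simply reproduce what is already inside the proof of Lemma 7.
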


\begin{corollary}

For any positive integers $n$ and $k$ the following lower bound holds:

$$\omega(2^{(2^{3^n}+1)^k}+1) \geq (k+1)^n-1 \geq k^n$$
\end{corollary}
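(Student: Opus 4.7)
The plan is to combine Corollary 1 with Lemma 7 in a direct way. Lemma 7 estimates $\omega(2^N+1)$ from below in terms of $\tau(N)$, so it suffices to produce an odd integer $N$ of the stated form with $\tau(N)$ large enough.

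First I would invoke Corollary 1 to decompose
$$2^{3^n}+1=p_1^{a_1}\cdots p_m^{a_m},$$
where the $p_i$ are distinct odd primes, $a_i\geq 1$, and $m=\omega(2^{3^n}+1)\geq n$. Raising to the power $k$ gives the factorization $N:=(2^{3^n}+1)^k=p_1^{ka_1}\cdots p_m^{ka_m}$, which is odd since $2^{3^n}+1$ is odd. Therefore Lemma 7 applies to $N$ and gives
$$\omega\bigl(2^N+1\bigr)\geq \tau(N)-1=\prod_{i=1}^m(ka_i+1)-1\geq (k+1)^m-1\geq (k+1)^n-1,$$
using $a_i\geq 1$ in the middle inequality and $m\geq n$ at the end.

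To finish, the elementary bound $(k+1)^n-1\geq k^n$ follows at once from the binomial expansion $(k+1)^n=k^n+nk^{n-1}+\cdots+1\geq k^n+1$. I do not anticipate any genuine obstacle: the entire argument is a short chain of inequalities, and the only substantive inputs, namely Zsigmondy's theorem (through Corollary 1) and the Lifting-the-Exponent-style observation encoded in Lemma 7, are already available. The only point worth being careful about is verifying that $N$ is odd before citing Lemma 7, which is immediate from the parity of $2^{3^n}+1$.
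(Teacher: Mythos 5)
Your proof is correct and follows essentially the same route as the paper: both use Corollary 1 to get $\omega(2^{3^n}+1)\geq n$, deduce $\tau\bigl((2^{3^n}+1)^k\bigr)\geq (k+1)^n$, and then apply Lemma~7 with $N=(2^{3^n}+1)^k$. Your version merely makes explicit the prime factorization, the oddness check, and the final inequality $(k+1)^n-1\geq k^n$, all of which the paper leaves implicit.
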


\begin{proof}
First corollary is a straightforward consequence of Lemma 7 for the case $N=3^n$, as $\tau(3^n)=n+1$. The second one easily follows from the inequality $$\tau((2^{3^n}+1)^k) \geq (k+1)^{\omega(2^{3^n}+1)} \geq (k+1)^n$$

and the Lemma 7 with $N=(2^{3^n}+1)^k$.

\end{proof}

Now we are ready to prove Theorem 1.

\begin{proof}

Suppose that for some $n$ and $k$ we have $(2^{3^n}+1)^k=N \leq x$.

By the Lemma 6,
$$\mathcal N_B(x) \geq \left(\frac{\ln{\frac{x}{N}}}{N}\right)^{\omega(2^N+1)}$$

On the other hand, in virtue of Corollary 2, $\omega(2^N+1) \geq k^n$.

Therefore

$$\mathcal N_B(x) \geq \left(\frac{\ln{\frac{x}{N}}}{N}\right)^{k^n}.$$

Now, choosing $k=\left[\frac{\sqrt{\ln\ln x}}{2}\right]$ and $n=\left[\frac{\ln\ln\ln x}{2\ln 3}\right]$, we get

$3^n\leq \sqrt{\ln\ln x}$ and, consequently, $2^{3^n}+1 \leq e^{3^n} \leq e^{\sqrt{\ln\ln x}}$. Thus, $N \leq e^{k\sqrt{\ln\ln x}} \leq \sqrt{\ln x}$.

So, for large enough $x$ we have

$$\mathcal N_B(x) \geq \exp(k^n/3) \geq \exp(\exp((1/4\ln3+o(1))(\ln\ln\ln x)^2)),$$

which completes the proof of Theorem 1.

\end{proof}

Theorem 2 will be deduced from the Theorem 1.

For the convenience, we set

$$d(x):=\max\{\omega(2^N+1)| N\leq x, N \text{ is a Nov{\'a}k number}\}.$$

Lemma 6 implies:
\begin{corollary}
For any $x>15000$ the estimate

$$\mathcal N_B(x) \geq e^{d(\sqrt{\ln x})}$$

holds.
\end{corollary}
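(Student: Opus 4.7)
The plan is to apply Lemma 6 directly to a Novák number realising the maximum in the definition of $d(\sqrt{\ln x})$. Set $k:=d(\sqrt{\ln x})$ and choose a Novák number $N$ with $N\le\sqrt{\ln x}$ and $\omega(2^N+1)=k$. Since Lemma 6 requires $N>1$, the first thing to verify is that such an $N$ can always be taken strictly greater than $1$. For $x>15000$ we have $\sqrt{\ln x}>3$, and $3$ is itself a Novák number with $\omega(2^3+1)=\omega(9)=1$; so whenever the maximum equals $1$ we may simply take $N=3$, while whenever the maximum is at least $2$ it cannot be attained at $N=1$ because $\omega(2^1+1)=\omega(3)=1$. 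Either way, an admissible $N>1$ is available.

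With this $N$ in hand, Lemma 6 yields
$$\mathcal N_B(x)\ge\left(\frac{\ln(x/N)}{N}\right)^{k},$$
so it is enough to prove the pointwise estimate $\frac{\ln(x/N)}{N}\ge e$, since then the right-hand side above is at least $e^{k}=e^{d(\sqrt{\ln x})}$. Using $N\le\sqrt{\ln x}$ and hence $\ln N\le\tfrac12\ln\ln x$, I would bound
$$\frac{\ln(x/N)}{N}\ge\frac{\ln x-\tfrac12\ln\ln x}{\sqrt{\ln x}}=\sqrt{\ln x}-\frac{\ln\ln x}{2\sqrt{\ln x}}.$$
A short numerical check then shows that the right-hand side exceeds $e$ for every $x>15000$; this is precisely what fixes the constant $15000$ in the hypothesis.

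I do not expect any genuine obstacle here, since the whole argument reduces to picking the right $N$, invoking Lemma 6, and performing an elementary one-variable estimate. The only point needing a little care is the guarantee that $N>1$ can always be chosen (which forces the threshold to lie above $e^9$), after which a modest extra margin absorbs the $\ln N$ term in the numerator and produces the clean constant $15000$.
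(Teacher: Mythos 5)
Your proposal is correct and follows essentially the same route as the paper: apply Lemma 6 to a Novák number $N\le\sqrt{\ln x}$ realising $d(\sqrt{\ln x})$ and then check that $\ln(x/N)/N\ge e$ via the bound $\sqrt{\ln x}\,(1-\frac{\ln\ln x}{2\ln x})>e$ for $x>15000$. Your extra care in guaranteeing $N>1$ (falling back on $N=3$ when the maximum is attained only at $N=1$) is a small point the paper glosses over, but it does not change the argument.
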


\begin{proof}

Suppose that $n\leq \sqrt{\ln x}$ is a Nov{\'a}k number such that $\omega(2^n+1)=d(\sqrt{\ln x})$. By the Lemma 6, we have

$$\mathcal N_B(X) \geq \exp(d(\sqrt{\ln x})\ln\left(\frac{\ln \frac{x}{n}}{n}\right)).$$

Noting that for $x \geq 15000$ the inequality

$$\ln\frac{\ln\frac{x}{n}}{n}\geq \ln(\sqrt{\ln x}\left(1-\frac{\ln\ln x}{2\ln x}\right))=\frac{1}{2}\ln\ln x+\ln\left(1-\frac{\ln\ln x}{2\ln x}\right)>1$$

is satisfied, we arrive at required proposition.
\end{proof}

This proves that Theorem 2 is implied by the analogous fact about $d(x)$:
\begin{theorem}

For any positive integer $n$ there exist positive constants $c_n$ and $X_n$ such that for any $x>X_n$ the lower bound

$$d(x) \gg_n e_n(C_n(\ln{_{n+1} x})^2)$$

holds.
\end{theorem}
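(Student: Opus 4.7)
The plan is to proceed by induction on $n$. The case $n=1$ is already contained in the proof of Theorem 1: the construction there produces a Novák number $N_0=(2^{3^m}+1)^k\leq \sqrt{\ln x}$ with $\omega(2^{N_0}+1)\geq k^m$, and setting $y=\sqrt{\ln x}$ and re-expressing $k$ and $m$ in terms of $y$ yields $d(y)\gg \exp(C_1(\ln_2 y)^2)$ for large $y$. This is exactly Theorem 3 for $n=1$.

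For the inductive step, assume $d(y)\gg_n e_n(C_n(\ln_{n+1}y)^2)$ for $y\geq Y_n$. Given $x$, set $y=\lfloor(\ln x)/(4\ln 2)\rfloor$, which exceeds $Y_n$ for $x$ large, choose a Novák number $M\leq y$ with $\omega(2^M+1)=d(y)$, and form
$$N:=(2^M+1)^2.$$
Since $M\mid 2^M+1$, every prime divisor of $(2^M+1)^2/M$ divides $2^M+1$, so Lemma 5 shows that $N$ is again a Novák number. Clearly $N\leq 4^{M+1}\leq x$ by the choice of $y$, and Lemma 7 gives
$$\omega(2^N+1)\geq \tau(N)-1 = \tau\bigl((2^M+1)^2\bigr)-1 \geq 3^{\omega(2^M+1)}-1 \geq 2^{d(y)}.$$
Consequently $d(x)\geq 2^{d(y)}\geq 2^{e_n(C_n(\ln_{n+1}y)^2)}$.

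To close the induction it remains to verify that with a strictly smaller constant $C_{n+1}<C_n$ (for instance $C_{n+1}=C_n/2$),
$$\ln 2 \cdot e_n\bigl(C_n(\ln_{n+1}y)^2\bigr)\geq e_n\bigl(C_{n+1}(\ln_{n+2}x)^2\bigr)$$
for all sufficiently large $x$. Since $y\asymp \ln x$, we have $\ln_{n+1}y=\ln_n(\ln_2 x-O(1))=(1-o(1))\ln_{n+2}x$ as $x\to\infty$, so the argument of $e_n$ on the left exceeds that on the right by an unboundedly growing additive quantity; the rapid growth of $e_n$ then easily absorbs the factor $\ln 2$, yielding $d(x)\geq e_{n+1}(C_{n+1}(\ln_{n+2}x)^2)$.

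The main obstacle is not number-theoretic (Lemmas 5 and 7 do all the work): the delicate point is the bookkeeping of the constants $C_n$ and thresholds $X_n$ through the iteration, in particular verifying that at each step one can choose a strictly smaller but still positive $C_{n+1}$ while the asymptotic $\ln_{n+1}y\sim\ln_{n+2}x$ remains uniformly applicable. The squaring in $N=(2^M+1)^2$ is used only to enforce $\tau(N)\geq 3^{\omega(2^M+1)}$; raising to larger powers is available but unnecessary, since an exponential amplification $d(x)\geq 2^{d(y)}$ already suffices to push through the tower.
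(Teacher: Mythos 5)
Your proposal is correct and follows essentially the same route as the paper: both establish the base case $n=1$ from the proof of Theorem 1 and then iterate the amplification ``$M$ Nov\'ak $\Rightarrow$ a Nov\'ak number of size $2^{O(M)}$ whose $2^N+1$ has at least $2^{\omega(2^M+1)}-1$ prime factors'' via Lemma 7, the paper taking $N=2^M+1$ directly where you take $N=(2^M+1)^2$. The squaring and the resulting $3^{\omega}$ versus $2^{\omega}$ is an inessential variation, and your bookkeeping of $y\asymp\ln x$ and $C_{n+1}<C_n$ matches the paper's.
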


\begin{proof}

We remark that from the proof of Theorem 1 follows the truth of Theorem 3 in the case $n=1$ (for any $C_1<\frac{1}{4\ln 3}$).
Let's prove Theorem 3 by induction on $n$.

Suppose that

$$d(x) \gg e_k(C_n(\ln_{k+1} x)^2)$$

Using Lemma 7 and the fact that $2^N+1$ is Nov{\'a}k number if  $N$ is, we get

$$d(2^x+1) \geq 2^{d(x)}-1$$

Indeed, if $n\leq x$ is a Nov{\'a}k number with $\omega(n)=d(x)$, then $2^n+1\leq 2^x+1$  is a Nov{\'a}k number, too, and by the Lemma 7,

$$\omega(2^n+1) \geq \tau(n)-1 \geq 2^{\omega(n)}-1=2^{d(x)}-1.$$
So, the inequality

$$d(x) \gg 2^{d(\log_2(x-1))}-1 \gg_k 2^{e_k(C_k(\ln_{k+2}(x)))}-1$$

holds. Therefore, it is sufficient to take any $C_{k+1}<C_k$.
\end{proof}

Theorem 3 is proved and, thereby, Theorem 2 too.

Our theorems can be generalized to the special class of the so-called divisibility sequences.
\begin{definition}

Let $\mathcal U=\{u_n\}$  be a sequence of integers. $\mathcal U$ is admissible divisibility sequence of moderate growth if the following 5 properties are satisfied:

\begin{enumerate}
    \item{\bf{Divisibility:}} If $n \mid m$, then $u_n \mid u_m$
    \item{\bf{Lifting the Exponent Lemma:}} If $p \mid u_n$, then $pu_n \mid u_{pn}$.
    \item{\bf{Zsygmondy-Banks property:}} For all positive integers $n$ with finite number of exceptions there exists a prime $p$ such that  $p \mid u_n$, but $p \nmid u_k$ for all $k<n$.
    \item{\bf{Moderate growth:}} There exists $a>0$ such that $u_n \ll a^n$.
    \item{\bf{Nondegeneracy:}} $u_1 \neq \pm 1$.
\end{enumerate}
\end{definition}

Using the same considerations as in the proofs of theorems 1 and 2, the following fact is easily deduced:

\begin{theorem}
For $\mathcal U$ is an admissible divisibility sequence of moderate growth define $\mathfrak U:=\{n:n\mid u_n\}$. Let $U(x)=|\mathfrak U\cap [1,x]|$ be the corresponding counting function. Then for any positive integer $n$ there exist two positive constants $x(n,\mathcal U)$ and $c(n,\mathcal U)$, such that for any $x>x(n,\mathcal U)$ we have

$$U(x) \gg_n e_n(c(n,\mathcal U)(\ln{_{n+1} x})^2).$$
\end{theorem}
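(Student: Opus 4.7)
\medskip
\noindent\emph{Proof sketch.} The plan is to transplant the proofs of Lemmas 5--7 and Theorems 1--3 verbatim to the abstract setting, letting the five axioms in the definition of $\mathcal U$ play the roles carried previously by explicit identities about $2^n+1$. The heart of the argument is the analog of Lemma~5: if $N\in\mathfrak U$ and $p\mid u_N$, then $Np\in\mathfrak U$. If $p\nmid N$, property~1 gives $u_N\mid u_{Np}$, so the coprime factors $N$ and $p$ both divide $u_{Np}$; if $p\mid N$, property~2 gives $pu_N\mid u_{Np}$, hence $Np\mid pu_N\mid u_{Np}$. Iterating, $Np_1^{\alpha_1}\cdots p_k^{\alpha_k}\in\mathfrak U$ whenever each $p_i\mid u_N$. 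The proof of Lemma~6 then goes through word for word, with the bound $\sum_i\ln p_i\leq\ln u_N\leq N\ln a+O(1)$ now supplied by property~4, and the conclusion becomes $U(x)\gg_{\mathcal U}(\ln(x/N)/N)^{k}$.

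Next, the analog of Lemma~7 is $\omega(u_N)\geq\tau(N)-c_0(\mathcal U)$: for each divisor $d$ of $N$ outside the absolutely bounded exceptional set of property~3, a primitive prime $p_d\mid u_d$ exists and $u_d\mid u_N$ by property~1; the $p_d$ are pairwise distinct. A base element of $\mathfrak U$ is furnished by property~5: pick any prime $p^\ast\mid u_1$, then property~2 with $n=1$ gives $p^\ast\mid u_{p^\ast}$, so $p^\ast\in\mathfrak U$, and induction on $n$ via property~2 yields $(p^\ast)^n\in\mathfrak U$ for all $n$. Moreover, whenever $N\in\mathfrak U$ we have $N\mid u_N$, so property~1 gives $u_N\mid u_{u_N}$, whence $u_N\in\mathfrak U$. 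Combining with the Lemma~5 analog, $(u_{(p^\ast)^n})^k\in\mathfrak U$; applying the Lemma~7 analog twice yields $\omega(u_{(u_{(p^\ast)^n})^k})\geq k^{n}$ for $n$ sufficiently large, exactly as in Corollary~2.

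From here the quantitative work is identical to the proofs of Theorems~1 and~3. For the base case $n=1$ of Theorem~4, I would take $n\approx\ln\ln\ln x/(2\ln p^\ast)$ and $k\approx\sqrt{\ln\ln x}/2$, use property~4 twice to bound $N:=(u_{(p^\ast)^n})^k\leq(\ln x)^{O(1)}$, and feed these into the Lemma~6 analog exactly as in the proof of Theorem~1. For the inductive step on $n$, set $d_{\mathcal U}(x):=\max\{\omega(u_N):N\in\mathfrak U,\ N\leq x\}$; the Lemma~7 analog combined with $\tau(M)\geq 2^{\omega(M)}$ and the observation that $u_N\in\mathfrak U$ yields the recursion $d_{\mathcal U}(Ca^{x})\geq 2^{d_{\mathcal U}(x)}-c_0$, which drives the tower precisely as in Theorem~3, losing at most a multiplicative constant in $c(n,\mathcal U)$ per level.

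The only real obstacle is bookkeeping. The exceptional set in property~3 contributes the additive constant $c_0(\mathcal U)$ at each use of the Lemma~7 analog, and one must check it is harmlessly absorbed once $n$ and $k$ are large; the growth constant $a$ in property~4 enters twice in the base case, so one needs $\ln a$ to land in an implicit constant rather than in the exponent, which is automatic in the range $N\leq(\ln x)^{O(1)}$. Beyond this, no new ideas are required: the definition of admissible divisibility sequence was chosen precisely to make this transport work.
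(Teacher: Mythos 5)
Your proposal is correct and follows exactly the route the paper intends: the paper offers no actual proof of Theorem 4 beyond the remark that it follows ``using the same considerations as in the proofs of Theorems 1 and 2,'' and your transplant of Lemmas 5--7, Corollary 2 and Theorems 1 and 3 to the axiomatic setting (with property 1 replacing the $\gcd$ argument, property 2 replacing Lifting the Exponent, property 3 replacing Zsigmondy, and $N\mid u_N\Rightarrow u_N\mid u_{u_N}$ replacing the $M=2^N+1$ trick) is precisely that. One small correction to your bookkeeping: the bound $N=(u_{(p^\ast)^n})^k\leq(\ln x)^{O(1)}$ is \emph{not} automatically sufficient, because once the exponent $O(\ln a)$ exceeds $1$ the quantity $\ln(x/N)/N$ falls below $1$ and the Lemma 6 analogue becomes vacuous; you must shrink $k$ by a factor of order $\ln a$ (say $k\approx\sqrt{\ln\ln x}/(4\ln a)$) so that $N\leq\sqrt{\ln x}$, which, as you anticipate, only affects the constant $c(1,\mathcal U)$.
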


Moderate growth condition can be replaced by weaker estimate (for example, by a bound of the form $u_n \ll a^{n^2}$), but this replacement will affect the right hand side of the inequality. Nondegeneracy condition cannot be weakened, as the sequence $\{2^n-1\}$ satisfies all the conditions, except nondegeneracy, and has $U(x) \equiv 1$.

\section{Nov{\'a}k primes}

Prime number $p$ is called a Nov{\'a}k prime if there exists some Nov{\'a}k number $N$ that is divisible by $p$. Let us denote the set of all Nov{\'a}k primes by $\mathcal P_{\mathcal N}$. We will also need to consider the set of all natural numbers, having no prime factor inside $\mathcal P_{\mathcal N}$. This set will be denoted $\overline{P_{\mathcal N}}$. In this section, we are interested in the distribution of Nov{\'a}k primes, that is in growth rate of the function

$$\pi_{\mathcal N}(x)=|\mathcal P_{\mathcal N} \cap [1,x]|.$$

We expect that Nov{\'a}k primes are rare. For example, the number 9137 is only the seventh element of $\mathcal P_{\mathcal N}$. Main goal of this part of the work is the proof of the following statement:
\begin{theorem}
If Generalized Riemann Hypothesis is true, then the following inequality is satisfied
$$\pi_{\mathcal N}(x) \ll \frac{x{\ln{\ln x}}}{(\ln x)^2}.$$
\end{theorem}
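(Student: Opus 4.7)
The plan is to leverage the GRH-based Lemma 3 after a structural reduction. First, I record the shape of $\ell_p(2)$ for any Nov{\'a}k prime $p$. Take a Nov{\'a}k number $N$ with $p \mid N$; then $N$ is odd (every Nov{\'a}k number divides the odd integer $2^N+1$), and $p \mid 2^N+1$ forces $\ell_p(2) \mid 2N$ while $\ell_p(2) \nmid N$. Consequently $\ell_p(2) = 2m$ with $m$ odd, $m \mid N$, and every prime factor of $m$ divides $N$, hence is itself a Nov{\'a}k prime. In particular every Nov{\'a}k prime $p$ satisfies $\nu_2(\ell_p(2)) = 1$ and the odd part $m$ of $\ell_p(2)$ is composed entirely of Nov{\'a}k primes.

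With this in hand, set $L = \lfloor c\,\ln x / \ln\ln x \rfloor$ for a sufficiently small constant $c > 0$. Applying Lemma 3 with $g = 2$ gives
$$
|\{p \leq x : \ell_p(2) \leq (p-1)/L\}| \ll \pi(x)/L \ll \frac{x \ln\ln x}{(\ln x)^2},
$$
which already matches the target bound and dispatches every Nov{\'a}k prime whose multiplicative order of $2$ is not too large.

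It then suffices to bound the Nov{\'a}k primes $p \leq x$ with $\ell_p(2) > (p-1)/L$; for each such $p$, the integer $m = \ell_p(2)/2$ is a large odd Nov{\'a}k-smooth divisor of $(p-1)/2$. I would apply the large-sieve inequality (Lemma 4) to the set of primes $p \leq x$, sieving by the following constraint: for every prime $q \leq Q$ that is \emph{not} a Nov{\'a}k prime, the residue classes of $p$ modulo $q$ which would force $q \mid \ell_p(2)$ are forbidden, since the odd part of $\ell_p(2)$ can only have Nov{\'a}k prime factors. Choosing $Q \asymp x^{1/2}/\ln x$ and using that Lemma 3 already makes non-Nov{\'a}k primes abundant among all primes, the sieve sum $S$ ought to be large enough to close the complementary bound $\ll x \ln\ln x/(\ln x)^2$.

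The main obstacle is this final step: the condition "$q \mid \ell_p(2)$" is not simply a congruence of $p$ modulo $q$, so turning it into a valid large-sieve input requires a Frobenius/Chebotarev argument in the splitting field of $x^q - 2$ (again conditional on GRH). A softer alternative would be induction on $x$: assuming the theorem for $y < x$, one handles the exceptional regime by summing Brun--Titchmarsh estimates over the progressions $p \equiv 1 \pmod{2q}$ indexed by the largest Nov{\'a}k prime factor $q$ of $m$, and closes the bootstrap using the convergence of $\sum_{q \in \mathcal{P}_{\mathcal{N}}} 1/q$ that the inductive hypothesis provides.
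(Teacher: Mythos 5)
Your first half coincides with the paper's: the observation that every odd prime factor of $\ell_p(2)$ for a Nov{\'a}k prime $p$ is itself a Nov{\'a}k prime, and the application of Lemma~3 with $L\asymp \ln x/\ln\ln x$ to dispose of the primes with small order, are exactly the opening moves of the paper's proof and already produce the target magnitude $x\ln\ln x/(\ln x)^2$ for that exceptional set. The gap is in the step you yourself flag as ``the main obstacle,'' and the resolution is simpler than either of the workarounds you sketch. You do not need to encode ``$q\mid \ell_p(2)$'' as a sieve condition at all. For a prime $p\in(\sqrt x, x]$ with $\ell_p(2)>(p-1)/L$, suppose an odd prime $q\notin\mathcal P_{\mathcal N}$ divides $p-1$. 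Since all odd prime factors of $\ell_p(2)$ are Nov{\'a}k primes, $q\nmid\ell_p(2)$, hence $[q,\ell_p(2)]=q\,\ell_p(2)$ divides $p-1$ and therefore $q\le (p-1)/\ell_p(2)\le L$. Contrapositively: for every non-Nov{\'a}k odd prime $q$ with $L<q\le\sqrt x$, the class $p\equiv 1\pmod q$ is forbidden --- a genuine linear congruence, which together with $p\not\equiv 0\pmod q$ gives $f(q)=2$ excluded classes for such $q$ and $f(q)=1$ otherwise. This is a perfectly legitimate input to Lemma~4; no Chebotarev argument in the splitting field of $x^q-2$ is needed.

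The second missing ingredient is the lower bound for the resulting sieve sum $S$. Your remark that ``Lemma 3 already makes non-Nov{\'a}k primes abundant'' does not do this: Lemma~3 says nothing about the density of $\mathcal P_{\mathcal N}$ inside the primes. The paper instead observes that every Nov{\'a}k prime $p$ divides some $2^N+1$ with $N$ odd, so $-2$ is a quadratic residue modulo $p$; hence $\pi_{\mathcal N}(x)\le(\tfrac12+o(1))\pi(x)$ unconditionally, the non-Nov{\'a}k primes have relative density at least $\tfrac12$, and Lemma~8 (a Bredikhin-type asymptotic for the semigroup they generate) gives $\sum_{k\le x^{1/4},\,k\in\overline{P_{\mathcal N}}}1/k\gg\ln^{1/2}x$, whence $S\gg \ln^{3/2}x/\ln\ln x$ and $\pi_{\mathcal N}(x)\ll x\ln\ln x/\ln^{3/2}x$. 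Feeding this improved density (now $1-o(1)$) back through Lemma~8 once more upgrades the reciprocal sum to $\gg\ln x$ and yields the stated bound. Your proposed Brun--Titchmarsh bootstrap is in the same self-improving spirit, but as written it has no unconditional starting point (you need the quadratic-residue observation or something like it to seed the induction), and the convergence of $\sum_{q\in\mathcal P_{\mathcal N}}1/q$ is not what the argument ultimately runs on.
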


To prove this sort of an estimate, we need to find some restrictions for the set of Nov{\'a}k primes. To guess, what we can do, let's look at the prime factors of first 24 numbers of the form $p-1$ with $p \in \mathcal P_{\mathcal N}$:
\vspace{0.2cm}
\begin{center}
\begin{table}[h]
\begin{tabular}{|c|c||c|c||c|c|}
\hline
$p$ & $p-1$ & $p$ & $p-1$ & $p$ & $p-1$\\
\hline
3 & $\boldsymbol{2}$ & 41113 &$\boldsymbol{2}^3\cdot\boldsymbol{3}^2\cdot\boldsymbol{571}$ & 174763 & $\boldsymbol{2}\cdot \boldsymbol{3}^2 \cdot 7 \cdot \boldsymbol{19} \cdot 73$ \\
\hline
19 & $\boldsymbol{2}\cdot\boldsymbol{3}^2$ & 52489 & $\boldsymbol{2}^3\cdot\boldsymbol{3}^8$ & 196579 & $\boldsymbol{2} \cdot \boldsymbol{3}^2 \cdot 67 \cdot \boldsymbol{163}$ \\
\hline
163 & $\boldsymbol{2}\cdot\boldsymbol{3}^4$ & 78787 & $\boldsymbol{2}\cdot\boldsymbol{3}^3\cdot\boldsymbol{1459}$ & 274081 & $\boldsymbol{2}^5 \cdot \boldsymbol{3} \cdot 5 \cdot \boldsymbol{571}$\\
\hline
571 & $\boldsymbol{2}\cdot\boldsymbol{3}\cdot5\cdot\boldsymbol{19}$ & 87211 & $\boldsymbol{2}\cdot\boldsymbol{3}^3\cdot5\cdot17\cdot\boldsymbol{19}$ & 370009 & $\boldsymbol{2}^3 \cdot \boldsymbol{3}^4 \cdot \boldsymbol{571}$\\
\hline
1459 & $\boldsymbol{2}\cdot \boldsymbol{3}^7$ & 135433 & $\boldsymbol{2}^3\cdot\boldsymbol{3}^4\cdot11\cdot\boldsymbol{19}$ & 370387 & $\boldsymbol{2}\cdot \boldsymbol{3}^3 \cdot \boldsymbol{19}^3$\\
\hline
8803 & $\boldsymbol{2}\cdot\boldsymbol{3}^3\cdot\boldsymbol{163}$ & 139483 & $\boldsymbol{2}\cdot\boldsymbol{3}^5\cdot7\cdot41$ & 478243 & $\boldsymbol{2} \cdot \boldsymbol{3}^2 \cdot \boldsymbol{163}^2$\\
\hline
9137 & $\boldsymbol{2}^4\cdot\boldsymbol{571}$ & 144667 & $\boldsymbol{2}\cdot\boldsymbol{3}^4\cdot\boldsymbol{19}\cdot47$ & 760267 & $\boldsymbol{2} \cdot \boldsymbol{3}^4 \cdot 13 \cdot \boldsymbol{19}^2$\\
\hline
17497 & $\boldsymbol{2}^3\cdot\boldsymbol{3}^7$ & 164617 & $\boldsymbol{2}^3\cdot\boldsymbol{3}\cdot\boldsymbol{19}^3$ & 941489 & $\boldsymbol{2}^4\cdot \boldsymbol{19}^2\cdot \boldsymbol{163}$\\
\hline
\end{tabular}
\caption{Factorizations of $p-1$'s. Elements of $\{2\}\cup \mathcal P_{\mathcal N}$ are made bold.}
\end{table}
\end{center}

Data of Table 1 allow us to observe that, heuristically, for most  $p \in \mathcal P_{\mathcal N}$  most of the prime factors of $p-1$ are in $\{2\}\cup\mathcal P_{\mathcal N}$. 

Lemma 3 gives some explanation of this phenomenon. Indeed, if $Np$ is a Nov{\'a}k number, then $p$ divides $2^{Np}+1$, consequently, $2^{2Np}-1$ is divisible by $p$ and $\ell_p(2) \mid 2Np$. On the other hand, if Generalized Riemann Hypothesis is true, then, for most primes $p$, the number $\ell_p(2)$ is a large prime factor of $p-1$. But all the prime factors of $2Np$ are by definition in $\{2\}\cup \mathcal P_{\mathcal N}$, therefore the same fact is true for most prime factors of most numbers of the form $p-1$, where $p \in \mathcal P_{\mathcal N}$.

To deduce the statement of Theorem 5 from the Lemma 3 rigorously, we need to use one lemma about free multiplicative subsemigroups in $\mathbb N$.

\begin{lemma}
Let $\mathcal Q$ be the set of prime numbers, such that for some positive $a$ and $b$ we have

$$\pi_{\mathcal Q}(x):=|\mathcal Q\cap [1,x]|=\frac{ax}{\ln x}+O(\frac{x}{\ln^{1+b} x})$$

If $\mathfrak Q$ is free subsemigroup of $\mathbb N$, generated by $\mathcal Q$, that is the set of all positive integers, having prime factors only in $\mathcal Q$, then we have the following asymptotic formula:

$$Q(x):=|\mathfrak Q\cap [1,x]|=C(\mathcal Q)x\ln^{a-1} x\left(1+O\left(\frac{1}{\ln\ln^{\min(1,b)} x}\right)\right)$$
\end{lemma}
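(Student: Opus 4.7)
The statement is a Landau--Selberg--Delange-type density theorem: a set of primes of density $a/\ln x$ should generate a multiplicative semigroup of size $\sim C\,x(\ln x)^{a-1}$. My plan is to prove it analytically, via Dirichlet series and a Hankel-contour argument.

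The first step is to set up $F(s)=\sum_{n\in\mathfrak Q} n^{-s}=\prod_{p\in\mathcal Q}(1-p^{-s})^{-1}$, absolutely convergent for $\mathrm{Re}\,s>1$. Its logarithm equals $\sum_{p\in\mathcal Q}p^{-s}$ plus a piece analytic in $\mathrm{Re}\,s>1/2$ (coming from prime powers with exponent $\geq 2$). Partial summation against $d\pi_{\mathcal Q}(t)$, using the hypothesis, yields
\[
\sum_{p\in\mathcal Q}p^{-s}=a\ln\frac{1}{s-1}+h(s),
\]
with $h$ continuous near $s=1$ on $\mathrm{Re}\,s\geq 1$ and having boundary regularity of order $b$; the $\ln^{1+b}$ savings in the prime count transfers directly into this regularity. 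Exponentiating, $F(s)=G(s)(s-1)^{-a}$ with $G$ holomorphic for $\mathrm{Re}\,s>1$ and inheriting the same local regularity at $s=1$.

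The second step is to recover $Q(x)$ from $F$ by a Perron-type integral and shift the contour onto a Hankel path encircling the branch point at $s=1$. The local expansion $F(s)\sim G(1)(s-1)^{-a}$, together with the standard Hankel evaluation of $\frac{1}{2\pi i}\int (s-1)^{-a}x^{s}\,ds/s$, produces the main term $\frac{G(1)}{\Gamma(a)}x(\ln x)^{a-1}$, identifying $C(\mathcal Q)=G(1)/\Gamma(a)$. The remainder has two sources: the deviation $G(s)-G(1)$ along the contour, and the tails of the Perron integral, both controlled by routine vertical-strip estimates.

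The central obstacle is the Tauberian bookkeeping at the end: carrying the input error $O(x/\ln^{1+b}x)$ through partial summation into a quantitative boundary modulus of continuity for $h$ (and hence $G$), and then through the Hankel integration into the output error for $Q(x)$. The $\min(1,b)$ cap appears because even an arbitrarily smooth $G$ cannot do better than its Lipschitz regularity through the Hankel integral, so savings of exponent larger than $1$ are lost. A purely elementary alternative, using the identity $\ln n=\sum_{d\mid n}\Lambda(d)$ restricted to $\mathfrak Q$ and bootstrapping on $Q(x)$, should deliver the same estimate and may in fact be cleaner given the relatively coarse target error term.
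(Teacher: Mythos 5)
The paper itself does not prove this lemma: it simply cites Bredikhin's theorem (Postnikov's book, p.~135), which is a Tauberian result tailored exactly to this hypothesis. So any self-contained argument you give is automatically ``different from the paper''; the question is whether your primary route works, and I do not think it does as stated. The step ``shift the contour onto a Hankel path encircling the branch point at $s=1$'' requires $F(s)(s-1)^{a}$ to continue analytically into a region strictly to the left of the line $\mathrm{Re}\,s=1$, at least in a neighbourhood of $s=1$. The hypothesis $\pi_{\mathcal Q}(x)=ax/\ln x+O(x/\ln^{1+b}x)$ gives you, after partial summation, only boundary regularity of $\sum_{p\in\mathcal Q}p^{-s}-a\ln\frac{1}{s-1}$ as $s\to 1$ from within the closed half-plane $\mathrm{Re}\,s\ge 1$; it gives no analytic continuation whatsoever past that line. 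The branch point sits on the boundary of the domain of holomorphy, there is nothing to encircle, and the Hankel evaluation cannot be performed. This is precisely the situation where the Selberg--Delange machinery in its contour-shift form breaks down and one must fall back on a genuinely Tauberian argument (Wirsing-type, or Bredikhin's). Your heuristic for the $\min(1,b)$ cap also points the wrong way: if $G$ really were holomorphic near $s=1$, Selberg--Delange would yield a full asymptotic expansion with errors saving arbitrary powers of $\ln x$, not the feeble $\ln\ln x$ saving in the statement. The $\ln\ln$-quality error is itself the fingerprint of a one-sided Tauberian deduction, not of any Lipschitz limitation of the Hankel integral.

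Your closing remark --- the elementary route via $\ln n=\sum_{d\mid n}\Lambda(d)$ restricted to $\mathfrak Q$, giving $Q(x)\ln x \approx \sum_{p\le x,\,p\in\mathcal Q}\,Q(x/p)\ln p + \cdots$ followed by a bootstrap --- is in fact the viable proof, and is essentially how Bredikhin/Wirsing-type theorems are established under a density hypothesis of this strength. I would promote that ``alternative'' to the main argument and discard the contour-shift plan, or else simply cite Bredikhin's theorem as the paper does.
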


\begin{proof} It is an easy consequence of Bredikhin theorem (see \cite{Post}, p.135).
\end{proof}

\begin{corollary}
Under the assumptions of the Lemma 8, we have

$$S(x,\mathfrak Q):=\sum_{\substack{n \in \mathfrak Q\\ n \leq x}} \frac{1}{n} \gg \ln^a x$$
\end{corollary}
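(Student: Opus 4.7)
The plan is to derive the lower bound for $S(x,\mathfrak Q)$ directly from the asymptotic formula for $Q(x)$ given by Lemma 8, using Abel's summation formula.

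First I would write
$$S(x,\mathfrak Q)=\sum_{\substack{n\in\mathfrak Q\\ n\leq x}}\frac{1}{n}=\int_{1^-}^{x}\frac{dQ(t)}{t}=\frac{Q(x)}{x}+\int_{1}^{x}\frac{Q(t)}{t^{2}}\,dt,$$
which is the standard partial-summation identity applied to the counting function $Q(t)$. Since $Q$ is nonnegative and nondecreasing, the integrand on the right is nonnegative, so a lower bound for $Q(t)$ on a suitable range is enough.

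Next I would plug in Lemma 8: for $t$ sufficiently large we have $Q(t)\geq \frac{C(\mathcal Q)}{2}t\ln^{a-1}t$, say. Restricting the integral to $t\in[x_{0},x]$ where $x_{0}$ is a large constant, we get
$$\int_{1}^{x}\frac{Q(t)}{t^{2}}\,dt\geq \frac{C(\mathcal Q)}{2}\int_{x_{0}}^{x}\frac{\ln^{a-1}t}{t}\,dt=\frac{C(\mathcal Q)}{2a}\bigl(\ln^{a}x-\ln^{a}x_{0}\bigr),$$
where the last equality uses the antiderivative $\int \ln^{a-1}t\,dt/t=\ln^{a}t/a$, valid since $a>0$ (which holds because $\mathcal Q$ is infinite, hence $\pi_{\mathcal Q}(x)\to\infty$). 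For $x$ large enough the $\ln^{a}x_{0}$ term is negligible, and we obtain $S(x,\mathfrak Q)\gg \ln^{a}x$ as required.

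There is no real obstacle here: the nonnegativity of $Q(t)/t^{2}$ means we do not even need to control the error term in Lemma 8 carefully, only to use the one-sided bound $Q(t)\gg t\ln^{a-1}t$ for $t$ past some threshold. The only minor point to be careful about is the case $a\leq 1$, in which $\frac{Q(x)}{x}=O(\ln^{a-1}x)$ is smaller than the integral contribution, so one really does need the integral (not just the boundary term) to drive the lower bound; the calculation above provides exactly this.
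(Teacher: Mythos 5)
Your proof is correct and is exactly the argument the paper leaves implicit: the corollary is stated without proof as an immediate consequence of Lemma 8, and partial summation applied to the counting function $Q(t)$, using the one-sided bound $Q(t)\gg t\ln^{a-1}t$ for large $t$ and the antiderivative $\ln^{a}t/a$, is the intended route. Your closing remark about the case $a\le 1$ (where the boundary term $Q(x)/x$ alone is insufficient and the integral must carry the bound) is a correct and worthwhile observation.
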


Using these facts together, we can now prove Theorem 5:
\begin{proof}

The set $\mathcal P_{\mathcal N}$ can be represented as a disjoint union of two sets $\mathcal R$ and $\mathcal Q$. The set $\mathcal R$ consists of primes $p$ in $\mathcal P_{\mathcal N}$ with $\ell_p(2) \leq \frac{p\ln\ln p}{\ln p}$, and $\mathcal Q$ is the set of all the other primes in $\mathcal P_{\mathcal N}$. By the Lemma 3, if Generalized Riemann Hypothesis is true, then the following estimate

$$|\mathcal R\cap[1,x]|\ll \frac{x\ln\ln x}{\ln^2 x}$$

holds. For all the other primes in $\mathcal P_{\mathcal N}$ we have $\ell_p(2) | p-1$, all the prime factors of $\ell_p(2)$ are either 2 or Nov{\'a}k primes. Therefore, any $p \in \mathcal Q$ with $\sqrt x<p \leq x$ has the following two properties: first, for any prime $q \leq \sqrt x$, $p$ is not divisible by $q$ and secondly, if an odd prime $q \not \in \mathcal P_{\mathcal N}$, but $p-1$ is divisible by $q$, then necessarily $q \leq \frac{\ln x}{\ln\ln x}$. Indeed, since all the odd prime factors of $\ell_p(2)$ are Nov{\'a}k primes, $q$ cannot divide $\ell_2(p)$. Thus, $p-1$ is divisible by $[q,\ell_2(p)]=q\ell_2(p)$. Consequently, $q \leq \frac{p-1}{\ell_2(p)}\leq \frac{\ln p}{\ln\ln p} \leq \frac{\ln x}{\ln\ln x}$.

So, $|\mathcal Q\cap(\sqrt x,x]| \leq |\mathcal A|$, where $\mathcal A$ is set of positive integers, not exceeding $x$, and not lying in any of $f(p)$ residue classes modulo $p$, where $p$ runs through all primes not exceeding $\sqrt x$. Here $f(p)$ is defined as follows: $f(p)=1$ if $p \leq \frac{\ln x}{\ln\ln x}$ or $p \in \mathcal P_{\mathcal N}$, and $f(p)=2$ otherwise.

Using Large Sieve inequality, we obtain the inequality

$$|\mathcal Q\cap[1,x]|\ll \sqrt x+\frac{x}{S},$$

with $$S=\sum_{n \leq \sqrt x} \mu^2(n)\prod_{p|n} \frac{f(p)}{p-f(p)}.$$

Let $Q(x)$ be subsemigroup, spanned by primes that are not in $\mathcal P_{\mathcal N}$ or not exceed $\frac{\ln x}{\ln\ln x}$ and $t(n)$ be the number of divisors of $n$, lying in $Q(x)$. It is easy to see that

$$S\geq \sum_{n \leq \sqrt x} \frac{\mu^2(n)t(n)}{n} \gg \sum_{n \leq \sqrt x} \frac{t(n)}{n}.$$

On the other hand,

$$\frac{t(n)}{n}=\sum_{\substack{mk=n\\ k\in Q(x)}} \frac{1}{km}$$

Therefore, the following inequality holds:

$$S \gg \sum_{m \leq x^{1/4}} \frac{1}{m} \sum_{\substack{k \leq x^{1/4} \\ k \in Q(x)}}\frac{1}{k} \gg \ln x \sum_{\substack{k \leq x^{1/4} \\ k \in Q(x)}}\frac{1}{k}$$

Furthermore,

$$\sum_{\substack{k \leq x^{1/4} \\ k \in Q(x)}}\frac{1}{k} \gg \sum_{\substack{k \leq x^{1/4} \\ k \in \overline{P_{\mathcal N}}}} \frac{1}{k} (L(x))^{-1},$$

where $L(x)$ is the sum of reciprocals of all positive integers with no prime factors greater than $\frac{\ln x}{\ln\ln x}$. Obviously, we have:

$$L(x)=\prod_{p \leq \frac{\ln x}{\ln\ln x}} \frac{p}{p-1} \asymp \ln\ln x$$

Thus, we finally get

$$S \gg \frac{\ln x}{\ln\ln x}\sum_{\substack{k \leq x^{1/4} \\ k \in \overline{P_{\mathcal N}}}}\frac{1}{k}$$

And deduce the following estimate

$$\pi_{\mathcal N}(x) \ll \frac{x\ln\ln x}{\ln^2 x}+\frac{x\ln\ln x}{\ln x}\left(\sum_{\substack{k \leq x^{1/4} \\ k \in \overline{P_{\mathcal N}}}}\frac{1}{k}\right)^{-1}$$

On the other hand, if $p \in \mathcal P_{\mathcal N}$ then there exists an odd number $N$ such that $p$ divides $2^N+1$, so $-2$ is a quadratic residue modulo $p$. Using this and Lemma 8, we prove that the sum at the right hand side of the latter inequality is at least $\gg\sqrt{\ln x}$, thus getting

$$\pi_{\mathcal N}(x) \ll \frac{x\ln\ln x}{\ln^{3/2} x}$$

Now, applying Corollary 4 again together with this estimate, we prove that the sum is $\gg \ln x$, so
$$\pi_{\mathcal N}(x) \ll \frac{x\ln\ln x}{\ln^2 x}$$

which proves Theorem 5.
\end{proof}

\section{Nov{\'a}k-Carmichael numbers and some conjectures}

Let's recall that natural number $N$ is called a \emph{Carmichael number} if $N$ divides $a^{N-1}-1$ for any integer $a$ coprime to $N$. It is a well-known fact that the following criterion holds:

\begin{theoremm}[(Korselt, 1899)]
A positive integer $n$ is a Carmichael number if and only if $n$ is squarefree and $p-1$ divides $n-1$ for any prime $p$ dividing $n$.
\end{theoremm}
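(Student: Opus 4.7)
The plan is to prove the two directions of the equivalence separately; both arguments are short and classical.

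For sufficiency, I would assume $n$ is squarefree with $p-1\mid n-1$ for every prime $p\mid n$, and take any $a$ coprime to $n$. For each prime divisor $p$ of $n$, Fermat's little theorem gives $a^{p-1}\equiv 1\pmod{p}$, and since $p-1\mid n-1$ we obtain $a^{n-1}\equiv 1\pmod{p}$. Because $n$ is the product of these distinct primes, the Chinese Remainder Theorem immediately yields $a^{n-1}\equiv 1\pmod{n}$, so $n$ is Carmichael.

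For necessity, assume $n$ is a Carmichael number. First I would show that $n$ is squarefree by contradiction: if $p^{2}\mid n$, then $1+p$ has order exactly $p$ in $(\mathbb{Z}/p^{2}\mathbb{Z})^{*}$, and by the Chinese Remainder Theorem one can choose an integer $a$ coprime to $n$ with $a\equiv 1+p\pmod{p^{\nu_{p}(n)}}$ and $a\equiv 1\pmod{n/p^{\nu_{p}(n)}}$. The Carmichael condition then forces $a^{n-1}\equiv 1\pmod{p^{2}}$, which means $p\mid n-1$. Combined with $p\mid n$ this yields the contradiction $p\mid(n,n-1)=1$, so $n$ must be squarefree. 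Next, for each prime $p\mid n$, I would pick a primitive root $g$ modulo $p$ (which exists since $(\mathbb{Z}/p\mathbb{Z})^{*}$ is cyclic) and use CRT again to produce an integer $a$ coprime to $n$ with $a\equiv g\pmod{p}$ and $a\equiv 1\pmod{n/p}$. The congruence $a^{n-1}\equiv 1\pmod{p}$, combined with the fact that $g$ has multiplicative order exactly $p-1$ modulo $p$, forces $(p-1)\mid(n-1)$, as required.

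There is no substantive obstacle here; the argument is entirely elementary and relies only on Fermat's little theorem, cyclicity of $(\mathbb{Z}/p\mathbb{Z})^{*}$, the explicit element of order $p$ in $(\mathbb{Z}/p^{2}\mathbb{Z})^{*}$, and the Chinese Remainder Theorem used to lift the prescribed residues to witnesses $a$ coprime to $n$.
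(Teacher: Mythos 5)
Your proof is correct and complete: both directions are the standard classical argument (Fermat plus $p-1\mid n-1$ for sufficiency; CRT witnesses built from $1+p$ of order $p$ in $(\mathbb{Z}/p^2\mathbb{Z})^{*}$ and from a primitive root modulo $p$ for necessity), and it works with the paper's definition of a Carmichael number. Note that the paper does not prove this statement at all --- Korselt's criterion is quoted as a known classical fact (``It is a well-known fact that the following criterion holds'') to motivate the analogous criterion for Nov{\'a}k--Carmichael numbers in Theorem~6, so there is no in-paper proof to compare against; your argument is the standard textbook one and fills that gap correctly.
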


In the frame of our work it is natural to consider an analogue of Carmichael numbers, i.e. positive integers $N$ with $a^N-1$ divisible by $N$ for any $a$ coprime to $N$. We will call these numbers a \emph{Nov{\'a}k-Carmichael numbers}. It's rather easy to see that an analogue of Korselt's criterion also holds (note that in this case $n$ is not necessary squarefree):

\begin{theorem}

A positive integer $n$ is a Nov{\'a}k-Carmichael number if and only if for all prime factors $p$ of $n$, it is true that $(p-1)\mid n$.
\end{theorem}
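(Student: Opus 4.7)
The plan is to establish the two implications separately, mimicking the argument for Korselt's classical criterion but without requiring squarefreeness of $n$.

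For the necessity direction I would fix a prime divisor $p$ of $n$ and construct an integer $a$, coprime to $n$, whose residue modulo $p$ is a primitive root. This is done via the Chinese Remainder Theorem: pick $a$ to be a primitive root modulo $p$ and congruent to $1$ modulo $n/p^{\nu_p(n)}$, so that automatically $\gcd(a,n)=1$. The Nov{\'a}k--Carmichael hypothesis then gives $a^n\equiv 1\pmod n$, hence $a^n\equiv 1\pmod p$, and since $a$ has order exactly $p-1$ modulo $p$ this forces $(p-1)\mid n$.

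For the sufficiency direction I would show $a^n\equiv 1\pmod n$ for every $a$ coprime to $n$ by checking the congruence modulo each prime power $p^k$ exactly dividing $n$ and assembling via the Chinese Remainder Theorem. By Euler's theorem, $a^{\varphi(p^k)}=a^{p^{k-1}(p-1)}\equiv 1\pmod{p^k}$, so it suffices to verify that $p^{k-1}(p-1)\mid n$. From $p^k\mid n$ we have $p^{k-1}\mid n$, and by assumption $(p-1)\mid n$; since $\gcd(p^{k-1},p-1)=1$, we get $\operatorname{lcm}(p^{k-1},p-1)=p^{k-1}(p-1)\mid n$, as required.

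I expect no real obstacle in the argument: it is essentially routine multiplicative-group theory. The only point worth a sentence in the write-up is that, in contrast with the classical Korselt criterion, $n$ need not be squarefree, so the sufficiency half must genuinely treat prime powers $p^k$ rather than single primes; the case $p=2$ is immediate since $p-1=1$ divides everything.
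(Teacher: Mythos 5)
Your proof is correct and complete: both directions are handled properly, and the sufficiency half correctly reduces to showing $\varphi(p^k)=p^{k-1}(p-1)\mid n$ for each prime power $p^k\,\|\,n$, which is exactly the point where the non-squarefree case differs from classical Korselt. The paper itself gives no proof of this theorem (it is dismissed as ``rather easy to see''), and your argument is the standard one the author evidently has in mind, so there is nothing to compare beyond noting that you have supplied the omitted details.
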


For example, 220 is a Nov{\'a}k-Carmichael, because it is divisible by $2-1$, $5-1$ and $11-1$.
It is clear that all the Nov{\'a}k-Carmichael numbers that are different from 1, are even. In view of this fact, it is interesting to ask, for which Nov{\'a}k numbers $N$ are $2N$ Nov{\'a}k-Carmichael? It turns out that prime factors of these numbers satisfy some very strong restrictions:

\begin{theorem}
Denote by $P_0$ the set of all prime numbers, that are congruent to 3 modulo 8.
For a positive integer $n$ we recursively define the set $P_n$ as the set of all primes $p \in P_{n-1}$ such that all the prime factors of $\frac{p-1}{2}$ are also in $P_{n-1}$. Denote by $P_\infty$ the intersection of all $P_n$'s for $n \geq 0$. Then a prime number $p$ is a prime factor of some Nov{\'a}k number $N$ such that $2N$ is a Nov{\'a}k-Carmichael number if and only if $p$ is an element of $P_{\infty}$.
\end{theorem}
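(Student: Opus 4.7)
The proof naturally splits into necessity and sufficiency. For necessity, I would prove by induction on $n$ that every prime $p$ dividing some Nov{\'a}k number $N$ with $2N$ a Nov{\'a}k--Carmichael number lies in $P_n$. The base case $n=0$ needs $p \equiv 3 \pmod 8$: since $(p-1) \mid 2N$ and $N$ is odd, one has $p \equiv 3 \pmod 4$; and if $2$ were a quadratic residue modulo $p$, writing $2 \equiv a^2 \pmod p$ would give $2^N \equiv a^{2N} \equiv 1 \pmod p$ (using $(p-1) \mid 2N$), contradicting $2^N \equiv -1 \pmod p$, so $2$ is a non-residue and hence $p \equiv 3 \pmod 8$. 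For the inductive step, $p \equiv 3 \pmod 8$ makes $(p-1)/2$ an odd divisor of $N$, so each prime $q \mid (p-1)/2$ also divides this same $N$; the inductive hypothesis applied to $q$ places it in $P_{n-1}$, whence $p \in P_n$.

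For sufficiency, given $p \in P_\infty$ I would construct $N$ explicitly. Let $S$ be the smallest subset of $P_\infty$ containing $p$ and closed under the operation ``adjoin every prime factor of $(q-1)/2$ for each $q$ already in the set''. The set $S$ is finite because each adjunction introduces primes strictly smaller than the one being processed, and it lies in $P_\infty$ by the very definition of $P_\infty$. For each $r \in S$ set $a_r = \max\!\bigl(1,\,\max_{q \in S}\nu_r((q-1)/2)\bigr)$ and define $N = \prod_{r \in S} r^{a_r}$. Then $p \mid N$, the number $N$ is odd, and by construction $(q-1)/2 \mid N$ for every $q \in S$, so $(q-1) \mid 2N$ for every prime divisor of $2N$ (the prime $2$ being trivial); hence $2N$ is Nov{\'a}k--Carmichael.

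The delicate point is verifying that $N$ is itself a Nov{\'a}k number. Fix $q \in S$ and let $2e_q = \mathrm{ord}_q(2)$, which is even because $q \equiv 3 \pmod 8$ forces $2$ to be a non-residue modulo $q$. Then $e_q$ is an odd divisor of $(q-1)/2$, so $e_q \mid N$ and $q \nmid e_q$; writing $N = e_q m$ with $m$ odd, and noting $q \mid 2^{e_q}+1$, the sum-form of Lemma 1 applied to $(a,b,k) = (2^{e_q},1,m)$ yields
\begin{equation*}
\nu_q(2^N + 1) \;=\; \nu_q(2^{e_q}+1) + \nu_q(m) \;\geq\; 1 + a_q,
\end{equation*}
so $q^{a_q} \mid 2^N+1$ for every $q \in S$. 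Since the $r^{a_r}$ are pairwise coprime, the Chinese remainder theorem gives $N \mid 2^N+1$.

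The main obstacle is that a squarefree $N$ does not suffice: already for $p=19$ one is forced to have $9 \mid N$ because $(19-1)$ must divide $2N$. The construction must therefore carry nontrivial prime powers, which is why the Nov{\'a}k property cannot be checked by a direct order calculation and really needs the full precision of Lifting The Exponent.
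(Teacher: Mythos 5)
Your proof is correct, and the necessity half coincides with the paper's argument (the paper gets $p\equiv 3\pmod 8$ from $\left(\tfrac{-2}{p}\right)=1$ combined with $8\nmid 2N$, whereas you combine $4\nmid 2N$ with $\left(\tfrac{2}{p}\right)=-1$; the two are equivalent, and the inductive step is identical). The sufficiency half is where you genuinely diverge. The paper builds essentially the same number --- it iterates $a_{n+1}=[a_n,\,p_{1n}-1,\,p_{2n}-1,\ldots]$ from $a_0=p$ until stabilization at $A=2N$ --- but it verifies the Nov{\'a}k property of $N$ with no local computation at all: since every prime factor of $N$ is $\equiv 3\pmod 8$, the number $-2$ is a quadratic residue modulo $N$, so one picks an odd $m$ with $m^2\equiv -2\pmod N$ and invokes the already-established Nov{\'a}k--Carmichael property of $2N$ to get $(-2)^N=m^{2N}\equiv 1\pmod N$, i.e.\ $2^N\equiv -1\pmod N$. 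You instead check $q^{a_q}\mid 2^N+1$ prime by prime, using that $\mathrm{ord}_q(2)=2e_q$ with $e_q$ an odd divisor of $(q-1)/2$, and then Lifting The Exponent (in the sum form, i.e.\ Lemma 1 with $b=-1$) to pick up the factor $\nu_q(m)=a_q$. Both arguments are sound: the paper's is shorter and elegantly recycles the Nov{\'a}k--Carmichael structure it has just constructed, with no need for LTE in this theorem, while yours is more self-contained and makes explicit exactly which prime powers of $2^N+1$ are controlled --- your closing remark about $9\mid N$ when $p=19$ correctly identifies why the exponents $a_r$, and hence LTE rather than a bare order computation, are unavoidable on your route.
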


\begin{proof}
We will prove our theorem by induction. First of all, note that $p$ is in $P_0$: indeed, $\left(\frac{-2}{p}\right)=1$, so $p$ is congruent to either 1 or 3 modulo 8. But it cannot be congruent to 1, as in this case, by our criterion, we have $8 \mid (p-1) \mid 2N$, which is a contradiction as $N$ is odd.

Assume that we have proved that every such prime $p$ is in $P_n$. By the Theorem 6, $p-1$ divides $2N$ and, consequently, $\frac{p-1}{2} \mid N$. So, by our assumption, every prime factor of $p-1$ is in $P_n$. This proves the first part of proposition.

Now, if $p \in P_\infty$, let's define the sequence $a_n$ by the formulas $a_0=p$, $a_{n+1}=[a_n,p_{1n}-1,p_{2n}-1,\ldots]$, where $p_{1n},p_{2n}\ldots$ are the prime factors of $a_n$. It is clear that $a_n$ stabilizes. Indeed, any $a_n$ is a divisor of $p!$ and $\{a_n\}$ is nondecreasing. Let $A=\lim_{n \to \infty} a_n$. Then, by Theorem 6, $A$ is a Nov{\'a}k-Carmichael number. On the other hand, every prime factor of $A$ is either $2$ or an element of $P_0$, because $a_{n+1}$ is always equals a least common multiple of $a_n$ and some number, having all prime factors in $P_{\infty}\cup \{2\}$. From this consideration it is also easily seen that $\nu_2(A)=1$. So, $A$ is of the form $2N$ for some $N$ with all prime factors in $P_0$. Thus, $-2$ is a quadratic residue modulo $N$. Since $N$ is odd, there exists some odd $m$ with $m^2 \equiv -2 \pmod N$.  Therefore, we have $(m,2N)=(m,A)=1$. Since $2N$ is a Nov{\'a}k-Carmichael number, we deduce that $m^A \equiv 1 \pmod N$ and, consequently, $-2^N=(-2)^N\equiv m^{2N}=m^A \equiv 1 \pmod N$. Thus, $N$ is a Nov{\'a}k number, which was to be proved.
\end{proof}

Based on the data of Table 1, we see that first few elements of $P_{\infty}$ are

$$3,19,163,1459,8803,78787,370387,478243\ldots$$

So, we expect that $P_{\infty}$ is a very thin, but yet infinite set of prime numbers.
\begin{conjecture}
The set $P_{\infty}$ is infinite.
\end{conjecture}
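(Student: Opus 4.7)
Conjecture 1 claims that $P_\infty$ is infinite, where $P_\infty$ is the largest set of primes $\equiv 3 \pmod 8$ such that, for every $p \in P_\infty$, every prime factor of $(p-1)/2$ also lies in $P_\infty$. Since $(p-1)/2 < p$, membership is witnessed by a finite rooted tree of primes $\equiv 3 \pmod 8$ whose leaves are all equal to $3$ (the unique prime with $(p-1)/2 = 1$). My plan is to combine an explicit construction with a Bateman--Horn density heuristic, while acknowledging that Conjecture 1 lies in the class of prime-existence questions currently beyond unconditional sieve methods.

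The starting observation is that if $p \equiv 3 \pmod 8$ and $(p-1)/2$ is a power of $3$, then $p \in P_\infty$ automatically. Writing $p = 2\cdot 3^a + 1$, the congruence $p \equiv 3 \pmod 8$ is equivalent to $a$ being even, giving the family $p_k := 2\cdot 9^k + 1$. The first terms $19, 163, 1459$ are prime and hence in $P_\infty$, while $p_4 = 13123 = 11\cdot 1193$ fails, so the family is not uniformly prime; nevertheless, proving that it contains infinitely many primes would already settle the conjecture. More flexibly, I would broaden the target to primes of the form $p = 2\cdot q_1^{a_1}\cdots q_r^{a_r} + 1$ with $\{q_1,\ldots,q_r\}$ a fixed finite subset of known elements of $P_\infty$; every such prime lies in $P_\infty$ by construction, and a Bateman--Horn heuristic applied to smooth shifts $p-1$ predicts $\gg (\log x)^{r-1}$ of them below $x$, hence infinitely many for any $r \ge 2$.

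An indirect route goes through Theorem 7: every Nov{\'a}k number $N$ with $2N$ Nov{\'a}k-Carmichael contributes $\omega(N)$ distinct primes to $P_\infty$, so exhibiting Nov{\'a}k-Carmichael numbers of this form with $\omega(N) \to \infty$ would also suffice. Iterating the closure construction of Theorem 7 on successively larger seed primes produces candidates, but the prime factorisation of the resulting $A = \lim a_k$ is controlled by the arithmetic of the chain $p,\,p-1,\,\ldots$, so growth of $\omega(N)$ loops back to a density statement about $P_\infty$ itself and the approach is circular without new input.

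The main obstacle is that every route I see reduces to the existence of infinitely many primes in a sparse, multiplicatively constrained sequence---equivalently, primes $p$ whose shift $p-1$ is smooth over a prescribed finite set of primes. Such problems are structurally analogous to the Cullen, Mersenne, and Fermat prime infinitude conjectures and lie outside current unconditional technology. I would therefore aim first for a conditional proof assuming Bateman--Horn or the Hardy--Littlewood prime tuples conjecture, and treat as a secondary realistic target an unconditional weak lower bound on $|P_\infty \cap [1,x]|$ obtained by combining the large sieve with the Nov{\'a}k-Carmichael closure construction.
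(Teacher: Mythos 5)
The statement you were asked to prove is stated in the paper as Conjecture~1, and the paper gives \emph{no proof of it}: the author merely lists the first few elements $3, 19, 163, 1459, 8803, 78787, 370387, 478243,\ldots$, expresses the expectation that $P_\infty$ is ``very thin, but yet infinite,'' and explicitly leaves even the infinitude of the intermediate sets $P_n$ for $n>1$ as an open question. Your proposal likewise does not prove the statement, and you say so candidly; so the honest verdict is that there is a gap --- the entire proof is missing --- but it is the same gap the paper itself leaves open, and your assessment of why it is hard is accurate. Your supporting observations all check out: since $p\equiv 3\pmod 8$ forces $(p-1)/2\equiv 1\pmod 4$ to be odd, the recursive membership condition does terminate in a finite tree whose leaves are all equal to $3$; the family $p=2\cdot 3^a+1$ lies in $P_0$ exactly when $a$ is even, giving $19, 163, 1459$ and the composite $2\cdot 9^4+1=13123=11\cdot 1193$; and any of these routes reduces to producing infinitely many primes $p$ with $p-1$ smooth over a prescribed finite set, which is indeed of the same flavour as the Fermat/Mersenne infinitude problems and beyond current unconditional methods. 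You also correctly flag the circularity in the route through Theorem~7.

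Two small cautions if you pursue this further. First, your Bateman--Horn count of $\gg(\log x)^{r-1}$ primes $p\le x$ with $p-1=2q_1^{a_1}\cdots q_r^{a_r}$ is the heuristic number of such \emph{integers} weighted by $1/\log$; the constants and the congruence condition $p\equiv 3\pmod 8$ need to be tracked (for instance, the seed set $\{q_1,\ldots,q_r\}$ must be chosen so that the product $2q_1^{a_1}\cdots q_r^{a_r}+1$ can actually land in the residue class $3$ modulo $8$, which constrains the parities of the $a_i$). Second, the ``secondary realistic target'' of an unconditional lower bound on $|P_\infty\cap[1,x]|$ via the large sieve is optimistic: the large sieve gives upper bounds for sets avoiding residue classes, not lower bounds for prime counts in sparse multiplicative families, so you would need a different mechanism (e.g., a Chen-type or almost-prime substitute) even for a weak unconditional result. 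As a writeup of the obstruction and of conditional strategies your text is sound; as a proof of Conjecture~1 it is not one, and neither the paper nor current technology offers one.
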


It might be interesting to formulate some heuristic arguments predicting the growth rate of the counting function of the set $P_{\infty}$. The problem of proving that at least some of the sets $P_n$ with $n>1$ are infinite is also seems to be a worthwhile question.
\end{fulltext}

\end{document}